\providecommand{\U}[1]{\protect\rule{.1in}{.1in}}
\newtheorem{theorem}{Theorem}
\newtheorem{proposition}[theorem]{Proposition}
\newenvironment{proof}[1][Proof]{\noindent\textbf{#1.} }{\ \rule{0.5em}{0.5em}}
\begin{document}

\title{Exterior/interior problem for the circular means transform with applications
to intravascular imaging}
\author{G. Ambartsoumian
\and L. Kunyansky}
\maketitle

\begin{abstract}
Exterior inverse problem for the circular means transform (CMT) arises in the
intravascular photoacoustic imaging (IVPA), in the intravascular ultrasound
imaging (IVUS), as well as in radar and sonar. The reduction of the IPVA to
the CMT is quite straightforward. As shown in the paper, in IVUS the circular
means can be recovered from measurements by solving a certain Volterra
integral equation. Thus, a tomography reconstruction in both modalities
requires solving the exterior problem for the CMT.

Numerical solution of this problem usually is not attempted due to the
presence of \textquotedblleft invisible" wavefronts, which results in  severe
instability of the reconstruction. The novel inversion algorithm proposed in
this paper yields a stable partial reconstruction: it reproduces the
\textquotedblleft visible" part of the image and blurs the \textquotedblleft
invisible" part. If the image contains little or no invisible wavefronts (as
frequently happens in the IVPA and IVUS) the reconstruction is quantitatively
accurate. The presented numerical simulations demonstrate the feasibility of
tomography-like reconstruction in these modalities.

\end{abstract}

\section*{Introduction}

Mathematical models of various imaging modalities are based on the circular
means transform (CMT), which maps a compactly supported function of two
variables to its integrals along a two-dimensional family of circles. Some
examples of such imaging techniques include thermo- and photo-acoustic
tomography, ultrasound reflection tomography, sonar and radar imaging. Under
certain reasonable assumptions, the problem of image reconstruction in all
these procedures can be reduced to the inversion of the CMT, i.e. the
reconstruction of the unknown image function from its integrals along circles
(e.g. see \cite{Haltm-circ, Kun-cyl, Louis:2000, Cheney,Norton1, XWAK-2}, as
well as Section~\ref{S:CMT} here). The choice of the family of integration
circles (and, in particular, their location with respect to the support of the
image function) depends on the setup of the imaging device.

Typically, the centers of integration circles correspond to the locations of
transducers recording the physical signals (acoustic or electromagnetic waves)
coming from the imaging object. As a result, these centers are limited to some
discrete locations distributed along a curve, which we call a data acquisition
curve. There are abundant results about the inversion of CMT for the so-called
interior problem, arising when the data acquisition curve surrounds the
support of the image function (e.g. see \cite{Kuch-Kun} and the references
there). However only limited results are available for the exterior problem,
where the support of the image function lies outside of that curve, or the
interior/exterior problem with the support partially inside and partially
outside of that curve (e.g. see \cite{AGL} and the references there).

Some of the applications for the interior/exterior problem are radar and sonar
imaging (\cite{Louis:2000,Cheney}). However, our present work is mostly
motivated by the recent advances in intravascular ultrasound imaging (IVUS)
and intravascular photoacoustic imaging (IVPA)
\cite{Emel,Sethur-2006,Sethur-2007,Sethur-2008,BoWang}. The measuring device
in these modalities is a thin catheter with a set of embedded ultrasound
transducers. The catheter is inserted in a blood vessel and records ultrasound
waves coming from the surrounding tissue. In IVUS, the transducer first
generates an outgoing ultrasound impulse and then switches to the listening
mode and records the wave reflected by the tissue. In IVPA the acoustic waves
are created by the photoacoustic effect: a short pulse of an infrared laser
heats up the tissue that, in turn, expands and generates an outgoing acoustic wave.

To the best of our knowledge, in IVUS and IVPA a tomographic reconstruction
currently is not attempted. Instead, a crude image is reconstructed by
superimposing the incoming signal with the directional information. One of the
goals of this paper is to show that, by utilizing the equipment and measuring
techniques similar to those currently used in IVUS and IVPA, one can
reconstruct qualitatively (and sometimes even quantitatively) correct images
of certain physical properties of biological tissue. As discussed below, in
IVPA the image reconstruction problem reduces to the inversion of the CMT by
considerations well known in the standard photoacoustic tomography. The
situation in IVUS is more complicated: since the reflected wave depends on the
speed of sound one tries to recover, the inverse problem is non-linear (unlike
that of IVPA). Since in soft tissues variations of the speed of sound are not
very large, one can try to linearize the problem using the Born approximation.
In Section~\ref{S:Born} we use this approach and show that in this case the
circular integrals of the speed of sound are related to the measurements by a
Volterra integral equation (in time) whose kernel is given by a certain
integral expression. We analyze properties of the kernel and show that this
integral equation can be stably solved (by means of successive iterations or
by linear algebra techniques) to recover the circular means. This reduces the
reconstruction problem, again, to the inversion of the CMT.

The exterior inverse problem for the CMT has received so far very little
attention in the literature. The main culprit here is the well-known
ill-posedness of this problem. In order for a material interface to be
\textquotedblleft visible" under the CMT, the normal to the interface should
intersect the surface supporting the transducers (in our case, the catheter).
It is clear that in a generic exterior problem not all interfaces are visible,
and therefore accurate and stable reconstruction of material properties is not
possible. However, in the intravascular imaging the walls of blood of vessels
mostly run in the visible directions, roughly parallel to the surface of the
catheter (see, for example, \cite{Sethur-2007} for good images of
cross-sections of aorta). As our numerical simulations show (see
Section~\ref{S:numerics}), when invisible interfaces are absent, a properly
regularized algorithm can stably reconstruct a quantitatively correct image.
If the invisible interfaces are present, they will appear blurred and the
corresponding part of the image will not be reconstructed correctly. These
numerical experiments suggest the feasibility of tomography-like
reconstruction in IVPA and IVUS, which can be obtained by the techniques
proposed here.

The rest of the paper is organized as follows: In Section~\ref{S:CMT} we
discuss an exact inversion formula for the CMT in the exterior and
interior/exterior problem. We then present an algorithm based on that formula
for the approximate reconstruction of a function from its circular means. In
Section~\ref{S:Born} we show that, under Born approximation, the inverse
problem for IVUS can be reduced to the inversion of the CMT by solving a
Volterra integral equation with a continuous kernel. Section~\ref{S:numerics}
presents results of numerical simulations demonstrating the work of the
reconstructions techniques proposed in the previous sections.

\section{Exterior/interior problem for the circular means
transform\label{S:CMT}}

\subsection{Formulation of the inverse problem in IVPA\label{S:formulation}}

It is not difficult to reduce the inverse problem arising in IVPA to the
inversion of the CMT. In fact, a very similar problem arises in the
photoacoustic tomography with integrating line detectors, and the derivation
presented below can be found in the corresponding literature (see, for example
\cite{Burgh-2007}).

Let us assume that the acoustic wave is measured by infinitely long
transducers placed on the surface of a cylindrical catheter of radius $R_{0}$
and parallel to the (vertical)\ axis of the catheter. We will assume for
simplicity that the speed of sound within the catheter coincides with the
constant speed of sound in the surrounding tissue. (The constant speed
approximation is frequently made in the problems of photo- and thermoacoustic
tomography in soft tissue.) Without loss of generality this speed of sound
then can be assumed to\ be 1. The electromagnetic energy of the incoming laser
pulse is absorbed by the tissue, which leads to \ thermoelastic  expansion and
generation of an acoustic wave. An excess pressure $p(t,\mathbf{x})$ solves
the initial value problem for the wave equation in $\mathbb{R}^{3}$:%
\begin{align*}
\Delta p &  =\frac{\partial^{2}p}{\partial t^{2}},\quad\mathbf{x=(}x_{1}%
,x_{2},x_{3}\mathbf{)}\in\mathbb{R}^{3},\quad t\in(0,\infty),\\
p(0,\mathbf{x}) &  =F(\mathbf{x}),\quad\frac{\partial p(0,\mathbf{x}%
)}{\partial t}=0,
\end{align*}
where initial pressure $F(\mathbf{x})$ is determined by the properties of the
tissue; this function carries important biological information and is the
object of our interest. It is well known that the integrals of
$p(t,\mathbf{x)}$ along any selected direction solve the 2D wave equation. We,
in particular, will consider the integrals $u(t,x)$ in the direction parallel
to transducers%
\[
u(t,x)=\int\limits_{\mathbb{R}}p(t,\mathbf{x})dx_{3}.
\]
These integrals satisfy the following 2D initial value problem (IVP)
\begin{equation}
\left\{
\begin{array}
[c]{c}%
\Delta u=\frac{\partial^{2}u}{\partial t^{2}},\quad x\mathbf{=(}x_{1}%
,x_{2}\mathbf{)}\in\mathbb{R}^{2},\quad t\in(0,\infty),\\
u(0,x)=f(x),\quad\frac{\partial u(0,x)}{\partial t}=0,
\end{array}
\right.  \label{E:2dIVP}%
\end{equation}
where%
\[
f(x)=\int\limits_{\mathbb{R}}F(\mathbf{x})dx_{3}.
\]
A transducer passing through the point $z=(z_{1},z_{2})\in\mathbb{R}^{2}$ (or,
correspondingly, through the point $(z_{1,}z_{2},0)$ in $\mathbb{R}^{3})$
measures the time series $u(t,z)$ for $t\in(0,\infty).$ Since the transducers
cover the surface of the catheter, such measurements are done for all $z$
lying on the circle of radius $R_{0}$ centered at the origin in $\mathbb{R}%
^{2}.$ Ideally, we would like to reconstruct $F(\mathbf{x})$ from the
measurements $u(t,z).$ However, since the transducers are infinitely long and
integrate the data along straight lines, there is not enough data to
reconstruct $F(\mathbf{x}).$ Instead, our goal is to reconstruct $f(x).$

Solution of the IVP (\ref{E:2dIVP}) can be represented with the help of the
Green's function $G_{2D}(t,x)$ of the two-dimensional wave equation in
$\mathbb{R}^{2}$ (describing the outgoing wave)%
\begin{align}
G_{2D}(t,x)  &  =\Phi\left(  t,\sqrt{x_{1}^{2}+x_{2}^{2}}\right)  ,\qquad
x=(x_{1},x_{2})\in\mathbb{R}^{2}\backslash\Omega,\qquad t\in\mathbb{R}%
,\label{E:green-simple1}\\
\Phi\left(  t,s\right)   &  =\frac{H\left(  t,s\right)  }{2\pi\sqrt
{t^{2}-s^{2}}}, \label{E:phi-def1}%
\end{align}
where $H(t,s)$ is the Heaviside function equal to 1 for $t>s$ and equal to 0
otherwise. The well-known Kirchhoff formula \cite{Vladimirov}\ yields%
\[
u(t,z)=\frac{\partial}{\partial t}\int\limits_{\mathbb{R}^{2}}f(x)G_{2D}%
(t,x-z)dx=\frac{\partial}{\partial t}\int\limits_{\mathbb{R}^{2}}%
f(x+z)G_{2D}(t,x)dx=\frac{\partial}{\partial t}\int\limits_{0}^{\infty}%
\Phi\left(  t,r\right)  \left[  r\int\limits_{\mathbb{S}^{1}}f(z+ry)dy\right]
dr,
\]
where the term in the brackets represents the circular integrals $g(z,r)$ of
$f(x)$:%
\[
g(z,r)\equiv r\int\limits_{\mathbb{S}^{1}}f(z+ry)dy.
\]
Taking into account equations (\ref{E:green-simple1}) and (\ref{E:phi-def1})
one obtains%
\begin{equation}
u(t,z)=\frac{1}{2\pi}\frac{\partial}{\partial t}\int\limits_{0}^{t}%
\frac{g(z,r)}{2\pi\sqrt{t^{2}-r^{2}}}dr \label{E:abel}%
\end{equation}
if $t>r,$ otherwise $u(t,z)=0.$ Equation (\ref{E:abel}) is one of the versions
of the well-known, explicitly invertible Abel transform (see \cite{Burgh-2007}
or \cite{FinchRakesh} Section 4.3); for every transducer (for every $z$ with
$|z|=R_{0})$ function $g(z,r)$ can be reconstructed by the following formula:
\[
g(z,r)=4r\int\limits_{0}^{r}\frac{u(t,z)}{2\pi\sqrt{r^{2}-t^{2}}}dt.
\]
This solves the problem of finding circular integrals $g(z,r)$ (or,
equivalently, circular means $\frac{g(z,r)}{2\pi})$ from the measurements
$u(t,z)$.

Next, we develop an algorithm for the approximate reconstruction of a function
from its circular means (or circular integrals) in 2D. The centers of the
integration circles lie on the circle $S_{0}$ of radius $R_{0}$ centered at
the origin $O$, as shown in Figure 1. \begin{figure}[h]
\begin{center}
\includegraphics[width=2.7in,height=1.9in]{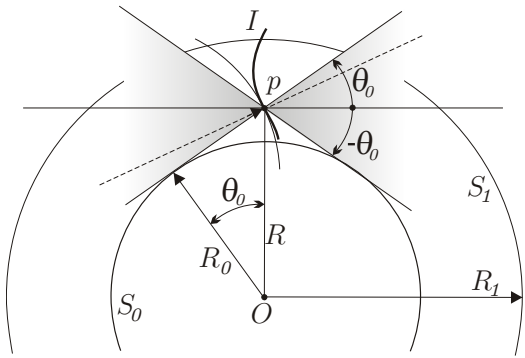}
\end{center}
\par
|\caption{Acquisition geometry and \textquotedblleft
invisible\textquotedblright\ directions}%
\label{F:geom}%
\end{figure}The function $f(x)$ is supported within a larger concentric circle
of radius $R_{1}>R_{0}.$ It will be (approximately) reconstructed from the
circular integrals $g(z,r)$ with centers $z=z(R_{0},\varphi)$ of the
integration circles sampling the whole circle $S_{0},$ i.e. $\varphi\in
\lbrack0,2\pi];$ the radii $r$ of the integration circles cover the interval
$[0,R_{0}+R_{1}].$ In other words, we consider an exterior/interior problem
for the circular means Radon transform.

The exterior problem for this transform is known to be ill-posed. In
particular, some wavefronts in the exterior of the circle $S_{0}$ are not
\textquotedblleft visible\textquotedblright, i.e. there are no integration
circles passing through the corresponding points orthogonally to the wavefront
directions. The \textquotedblleft invisible\textquotedblright\ wavefronts
cannot be stably reconstructed (e.g. see \cite{XWAK-2} and the references
there). An example of such a wavefront passing through a point $p$ is shown in
Figure 1. Here, the shaded area shows the directions of invisible
singularities at point $P$. In particular, if the function has a jump
discontinuity across interface $I$ shown in the figure, then it can not be
stably recovered from the circular means.

Since an arbitrary function has all possible wavefronts, including the
invisible ones, the general exterior (or interior/exterior) problem can only
be solved approximately. The algorithm we present below stably reconstructs
theoretically visible wavefronts and blurs the invisible ones.

\subsection{Image reconstruction from the circular means}

We start by computing the Hankel transform $\hat{g}(z,\lambda)$ of $g(z,r)/r$:%
\begin{align}
\hat{g}(z,\lambda)  &  =\int\limits_{0}^{\infty}J_{0}(\lambda
r)g(z,r)dr\label{E:ghat}\\
&  =\int\limits_{\mathbb{R}^{2}}f(z+y)J_{0}(\lambda|y|)dy=\int%
\limits_{\mathbb{R}^{2}}f(x)J_{0}(\lambda|x-z|)dx. \label{E:convolution}%
\end{align}
The next step is to substitute into (\ref{E:convolution}) the following
expression representing the well-known addition theorem (Thm. 2.10
in~\cite{Colton}) for $J_{0}:$%
\begin{align}
J_{0}(\lambda|z(R_{0},\varphi)-x(r,\theta)|)  &  =\sum_{n=-\infty}^{\infty
}J_{|n|}(\lambda R_{0})e^{in\varphi}J_{|n|}(\lambda r)e^{-in\theta
},\label{E:addition}\\
x  &  =(r\cos\theta,r\sin\theta),\quad z=R_{0}(\cos\varphi,\sin\varphi
).\nonumber
\end{align}
Since the series in (\ref{E:addition}) converges absolutely and uniformly we
obtain%
\begin{align}
\hat{g}(z(R_{0},\varphi),\lambda)  &  =\int\limits_{0}^{2\pi}\int%
\limits_{0}^{R_{0}+R_{1}}f(x(r,\theta))\sum_{n=-\infty}^{\infty}%
J_{|n|}(\lambda R_{0})e^{in\varphi}J_{|n|}(\lambda r)e^{-in\theta}%
rdrd\theta\nonumber\\
&  =\sum_{n=-\infty}^{\infty}J_{|n|}(\lambda R_{0})e^{in\varphi}%
\int\limits_{0}^{R_{0}+R_{1}}\left[  \int\limits_{0}^{2\pi}f(x(r,\theta
))e^{-in\theta}d\theta\right]  J_{|n|}(\lambda r)rdr. \label{E:circharm}%
\end{align}
Let us expand both $\hat{g}(z(R_{0},\varphi),\lambda)$ and $f(x(r,\theta))$ in
the Fourier series with respect to the angular variables:%
\begin{align}
f(x(r,\theta))  &  =\sum_{k=-\infty}^{\infty}f_{k}(r)e^{ik\theta},\qquad
f_{k}(r)=\frac{1}{2\pi}\int\limits_{0}^{2\pi}e^{-ik\theta}f(x(r,\theta
))d\theta,\label{E:ffourier}\\
\hat{g}(z(R_{0},\varphi),\lambda)  &  =\sum_{l=-\infty}^{\infty}\hat{g}%
_{l}(\lambda)e^{il\varphi},\qquad\hat{g}_{l}(\lambda)=\frac{1}{2\pi}%
\int\limits_{0}^{2\pi}e^{-il\phi}\hat{g}(z(R_{0},\varphi),\lambda)d\varphi.
\label{E:gfourier}%
\end{align}
Then, by substituting (\ref{E:circharm}) into the second equation in
(\ref{E:gfourier}) one obtains%
\begin{equation}
\hat{g}_{l}(\lambda)=2\pi J_{|l|}(\lambda R_{0})\int\limits_{0}^{\infty}%
rf_{l}(r)J_{|l|}(\lambda r)dr,\quad l\in\mathbb{Z}. \label{E:hankel}%
\end{equation}
Let us formally divide equation (\ref{E:hankel}) by $J_{|l|}(\lambda R_{0})$
(a discussion of this step follows below). We obtain the following equation%
\begin{equation}
F_{l}(\lambda)=\int\limits_{0}^{\infty}rf_{l}(r)J_{|l|}(\lambda r)dr
\label{E:hankel1}%
\end{equation}
where%
\begin{equation}
F_{l}(\lambda)\equiv\frac{\hat{g}_{l}(\lambda)}{2\pi J_{|l|}(\lambda R_{0})}.
\label{E:ratio}%
\end{equation}
The right hand side of (\ref{E:hankel1}) is the self-invertible Hankel
transform; thus $f_{l}(r)$ can be computed as%
\begin{equation}
f_{l}(r)=\int\limits_{0}^{\infty}F_{l}(\lambda)J_{|l|}(\lambda r)\lambda
d\lambda,\quad l\in\mathbb{Z}. \label{E:norton}%
\end{equation}
By combining the equations (\ref{E:ghat}), (\ref{E:gfourier}), (\ref{E:ratio}%
), (\ref{E:norton}) and (\ref{E:ffourier}) one formally reconstructs $f(x).$

This solution was first proposed by Norton~\cite{Norton1} for the interior
problem, i.e. for the case when the reconstructed function is supported in the
interior of $S_{0}.$ An important issue not addressed by Norton is the
treatment of the zeros of the Bessel functions $J_{|l|}(\lambda R_{0})$ in the
denominator of equation (\ref{E:ratio}). Since the Hankel transform $F_{l}$ of
$f_{l}$ (given by (\ref{E:hankel1})) is a bounded function of $\lambda,$ the
ratio (\ref{E:ratio}) remains bounded for all values of $\lambda,$ implying
that the exactly computed $\hat{g}_{l}(\lambda)$ vanishes at all zeros
$\lambda_{l,m},$ $(m=0,1,2,...)$ of the Bessel function $J_{l}(\lambda
R_{0}).$ Therefore, all the singularities in (\ref{E:ratio}) are removable,
for example, by application of the L'Hospital's rule. This is not a very
practical algorithmic solution since an approximation to $\hat{g}_{l}%
(\lambda)$ computed from the noisy data will not, in general, have zeros at
$\lambda_{l,m}.$ Even if one continues to formally use the L'Hospital's rule
at $\lambda_{l,m},$ computing the fraction $\hat{g}_{l}(\lambda)/J_{|l|}%
(\lambda R_{0})$ at values of $\lambda$ close to $\lambda_{l,m}$ is still an
ill-posed problem. A better solution was proposed in~\cite{Haltm-circ} where
the Fourier-Bessel series was used in a way that avoids divisions by zeros. A
more elegant solution~\cite{AmbKuch,Kun-cyl} (applicable for the interior
problem only) is to replace the Bessel function $J_{0}(\lambda r)$ in
(\ref{E:ghat}) and in (\ref{E:addition}) by the Hankel function $H_{0}%
^{(1)}(\lambda r).$ This leads to a ratio similar to (\ref{E:ratio}), but with
Bessel functions in the denominator replaced by Hankel functions. Since the
latter functions do not have zeroes for real values of $\lambda,$ a
straightforward discretization of (\ref{E:norton}) leads to a stable algorithm.

In the present case of the exterior or exterior/interior problem the methods
of~\cite{AmbKuch,Haltm-circ,Kun-cyl} are no longer applicable. In order to
avoid divisions by small values of $J_{|l|}(\lambda R_{0})$ in (\ref{E:ratio})
we will utilize two techniques. The first of them consists in using complex
values of $\lambda$ and deforming the integration contour in (\ref{E:norton})
to avoid all the zeros of $J_{|l|}(\lambda R_{0})$ except the first one at
$\lambda=0.$ In the next section we show that such a deformation does not
change the computed value of $f_{l}.$

In order to deal with the zero at $\lambda=0$ and with the small values of
$J_{|l|}(\lambda R_{0})$ in the neighborhood of this zero we will have to
restrict the values of $\lambda$ and $l$ for which $F_{l}(\lambda)$ are
computed, and to replace by zero the missing values. After such a replacement
the algorithm will no longer be theoretically exact, but the computations will
be stable. This technique is described in Section~\ref{S:regularization} .

\subsection{Deforming the contour}

In order to avoid division by zeros in (\ref{E:ratio}) we will work with
complex values of $\lambda$ lying on the curve $C$ consisting of the segments
$[0,ia]$ and infinite line $[ia,ia+M]$ with $M$ real, going to $\infty.$ The
values of $\hat{g}(z,\lambda)$ are computed by formula (\ref{E:ghat}) for all
$\lambda\in C.$ Then $f_{l}(r)$ is computed using instead of (\ref{E:norton})
the following formula%
\begin{equation}
f_{l}(r)=\int\limits_{C}F_{l}(\lambda)J_{|l|}(\lambda r)\lambda d\lambda,\quad
l\in\mathbb{Z}. \label{E:contourint}%
\end{equation}
Since all zeros of Bessel functions $J_{|l|}(t)$ are real the denominator in
the formula (\ref{E:ratio}) (defining $F_{l}(\lambda))$ vanishes only at
$\lambda=0$ (except the case $l=0$ when the denominator does not vanish at
$\lambda=0$). Numerical integration in the neighborhood of $\lambda=0$
requires regularization, as described in the next section.

We need to prove that formulas (\ref{E:norton}) and (\ref{E:contourint}) are
indeed equivalent. First, we observe the following well known property:

\begin{proposition}
For each $l\in\mathbb{Z}$, function $F_{l}(\lambda)$ given by equation
(\ref{E:hankel1}) is an entire function.
\end{proposition}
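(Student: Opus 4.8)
The plan is to read off the analyticity of $F_{l}$ directly from its defining integral (\ref{E:hankel1}) by combining the fact that the Bessel kernel is \emph{entire} in its argument with the compact support of $f$, and then to promote holomorphy through the integral by Morera's theorem together with Fubini's theorem.

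First I would reduce the domain of integration to a finite interval. Since $f(x)$ is supported in the disk of radius $R_{1}$, its angular Fourier coefficients $f_{l}(r)$ from (\ref{E:ffourier}) vanish for $r>R_{1}$, so (\ref{E:hankel1}) reads
\[
F_{l}(\lambda)=\int_{0}^{R_{1}} r\, f_{l}(r)\, J_{|l|}(\lambda r)\, dr .
\]
Moreover $r\mapsto r f_{l}(r)$ lies in $L^{1}([0,R_{1}])$: by Fubini and the definition of $f_{l}$,
\[
\int_{0}^{R_{1}} r\,|f_{l}(r)|\, dr \le \frac{1}{2\pi}\int_{0}^{R_{1}}\!\!\int_{0}^{2\pi} |f(x(r,\theta))|\, r\, d\theta\, dr = \frac{1}{2\pi}\,\|f\|_{L^{1}(\mathbb{R}^{2})} < \infty .
\]

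Next, since $|l|$ is a nonnegative integer, $J_{|l|}$ is an entire function, so for each fixed $r\in[0,R_{1}]$ the map $\lambda\mapsto J_{|l|}(\lambda r)$ is entire. Fixing an arbitrary compact $K\subset\mathbb{C}$, I would set $\rho_{K}=R_{1}\sup_{\lambda\in K}|\lambda|$ and $m_{K}=\sup_{|w|\le\rho_{K}}|J_{|l|}(w)|<\infty$, so that for all $\lambda\in K$ and $r\in[0,R_{1}]$,
\[
\bigl|\,r\,f_{l}(r)\,J_{|l|}(\lambda r)\,\bigr| \le R_{1}\, m_{K}\,|f_{l}(r)| ,
\]
a fixed $L^{1}([0,R_{1}])$ majorant independent of $\lambda\in K$. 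Dominated convergence then yields continuity of $F_{l}$ on $K$, and for any triangle $\Delta\subset K$ the same majorant legitimizes Fubini's theorem, giving
\[
\oint_{\partial\Delta} F_{l}(\lambda)\,d\lambda = \int_{0}^{R_{1}} r\,f_{l}(r)\Bigl(\oint_{\partial\Delta} J_{|l|}(\lambda r)\,d\lambda\Bigr) dr = 0 ,
\]
since $J_{|l|}(\,\cdot\, r)$ is entire. By Morera's theorem $F_{l}$ is holomorphic in the interior of $K$; as $K$ was arbitrary, $F_{l}$ is entire.

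The argument is essentially routine; the only points requiring a word of care are the reduction to the finite interval $[0,R_{1}]$ (which makes any decay estimate on $J_{|l|}$ at infinity unnecessary) and the $L^{1}$-domination, which simultaneously supplies the continuity of $F_{l}$ and licenses the Fubini interchange inside Morera's theorem — both resting only on the mild hypothesis that $f\in L^{1}(\mathbb{R}^{2})$ with compact support. One could alternatively reach the same conclusion — indeed that $F_{l}$ is of exponential type $R_{1}$ — by recognizing $F_{l}(\lambda)$ as a partial-wave component of the Fourier transform of the compactly supported $f$ and invoking a Paley--Wiener type statement, but the Morera route is the most self-contained.
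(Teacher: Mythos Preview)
Your proof is correct. The paper, however, takes a different and much shorter route: it simply observes that $F_{l}(\lambda)$ is, up to a constant, the restriction of the two-dimensional Fourier transform of the compactly supported function $f_{l}(r)e^{il\theta}$ to the complex line $\xi_{1}=-\lambda$, $\xi_{2}=0$, and invokes the fact that the Fourier transform of a compactly supported function is entire. This is precisely the Paley--Wiener alternative you mention in your last paragraph. Your direct Morera/Fubini argument is more self-contained---it needs only the compact support of $f$, the integrability of $rf_{l}(r)$, and the entireness of $J_{|l|}$, with no appeal to the Hankel--Fourier correspondence---while the paper's argument is conceptually cleaner and implicitly yields the stronger conclusion (exponential type $R_{1}$) that you also note. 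Either approach is adequate here; the paper's is the standard shortcut when the Fourier connection is taken as known.
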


\begin{proof}
This fact follows from the well-known relation between the Hankel transforms
(\ref{E:hankel1}) and the entire 2D Fourier transform of $f(x).$ I.e., up to a
constant factor, $F_{l}(\lambda)$ is the restriction of the entire 2D Fourier
transform $\hat{f}_{l}(\xi_{1},\xi_{2})$ of a finitely supported function
$f_{l}(r)\exp(il\theta)$ to the complex plane $\xi_{1}=-\lambda\in\mathbb{C}$,
$\xi_{2}=0.$
\end{proof}

Now, in order to prove that the integral over the positive real values of
$\lambda$ can be replaced by the integral over $C$ it is enough to show that%
\begin{equation}
\lim_{b\rightarrow\infty}\int\limits_{b}^{b+ia}F_{l}(\lambda)J_{|l|}(\lambda
r)\lambda d\lambda=0. \label{E:limit}%
\end{equation}
Let us first recall the asymptotic behavior of the Bessel function $J_{\nu
}(z)$ for large values of argument$~z$ (see \cite{AbrSteg}, formula 9.2.1)
\begin{equation}
J_{\nu}(z)=\sqrt{\frac{2}{\pi z}}\left[  \cos\left(  z-\frac{\nu\pi}{2}%
-\frac{\pi}{4}\right)  +e^{|\operatorname{Im}z|}\mathcal{O}(|z|^{-1})\right]
. \label{E:bessasy}%
\end{equation}
It follows that for $\lambda$ lying on the segment $[b,b+ia]$ and $r\in
\lbrack0,R_{1}]$%
\[
|J_{|l|}(\lambda r)|\leq\sqrt{\frac{2}{\pi b}}e^{aR_{1}}\left[  1+\mathcal{O}%
\left(  \frac{1}{b}\right)  \right]  .
\]

Now, in order to prove (\ref{E:limit}) it is enough to show that
$F_{l}(\lambda)$ decays sufficiently fast in the limit $b\rightarrow\infty.$
This condition is given by the following

\begin{proposition}
For every fixed integer $l\geq0,$ in the semi-infinite strip $D=\{\lambda
\,|\;0\leq\operatorname{Im}$ $\lambda\leq a,$ $\operatorname{Re}\lambda
\geq0\}$ function $F_{l}(\lambda)$ decays uniformly with respect to
$b\equiv\operatorname{Re}\lambda$ as $o(b^{-1/2})$ when $b\rightarrow\infty.$
\end{proposition}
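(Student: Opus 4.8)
The plan is to recognize $F_l(\lambda)$ as (a constant times) the restriction of the two-dimensional Fourier transform of the compactly supported function $f_l(r)e^{il\theta}$ to the line $\xi_1=-\lambda$, $\xi_2=0$, and then to exploit the smoothing that the Fourier transform gains from the integrability of $f$. First I would write, using \eqref{E:hankel1} and the standard identity relating the Hankel transform of order $|l|$ to the angular Fourier modes of the 2D Fourier transform,
\[
F_l(\lambda)=c_l\,\widehat{f_l e^{il\theta}}(-\lambda,0),\qquad \lambda\in\mathbb{C},
\]
where $\widehat{\cdot}$ denotes the entire extension of the 2D Fourier transform (this entire extension exists by the previous Proposition and the Paley–Wiener theorem, since $f$ — and hence each $f_l(r)e^{il\theta}$, being an $L^1$ angular projection — is compactly supported in the disk of radius $R_1$). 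For $\lambda=b+i\tau$ with $0\le\tau\le a$, the Paley–Wiener estimate gives the crude bound $|F_l(\lambda)|\le C\,e^{aR_1}\|f\|_{L^1}$, which is uniform but not yet decaying.

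To upgrade the uniform bound to $o(b^{-1/2})$ decay I would use the Riemann–Lebesgue lemma, applied not to $F_l$ itself but after subtracting its maximal vertical-strip majorant. Concretely, write $\lambda=b+i\tau$ and note
\[
F_l(b+i\tau)=c_l\int_{|x|\le R_1} f_l(|x|)e^{il\,\mathrm{arg}\,x}\,e^{i b x_1}\,e^{-\tau x_1}\,dx .
\]
For each fixed $\tau\in[0,a]$ the function $x\mapsto f_l(|x|)e^{il\arg x}e^{-\tau x_1}\mathbf 1_{|x|\le R_1}$ is in $L^1(\mathbb{R}^2)$ with norm bounded by $e^{aR_1}\|f_l e^{il\theta}\|_{L^1}$ independently of $\tau$, so by Riemann–Lebesgue its Fourier transform in the $x_1$-variable tends to $0$ as $b\to\infty$. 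The uniformity in $\tau$ follows because the family indexed by $\tau\in[0,a]$ is a compact (hence uniformly integrable, and in fact $L^1$-continuous) family of $L^1$ functions: one shows $\tau\mapsto f_l(|x|)e^{il\arg x}e^{-\tau x_1}\mathbf 1_{|x|\le R_1}$ is continuous from $[0,a]$ into $L^1(\mathbb R^2)$ (dominated convergence with dominant $e^{aR_1}|f_l|$), and a continuous image of a compact set is compact, so a single $\varepsilon$-net argument plus Riemann–Lebesgue on finitely many points yields the uniform limit $\sup_{0\le\tau\le a}|F_l(b+i\tau)|\to0$.

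Finally, to get the quantitative rate $o(b^{-1/2})$ rather than merely $o(1)$, I would integrate by parts once in $x_1$ (equivalently, factor out one power of $\lambda$): writing $g_l$ for the one-dimensional $x_1$-marginal of $f_l(|x|)e^{il\arg x}\mathbf 1_{|x|\le R_1}$, this marginal is a compactly supported function that is at least Hölder-$1/2$ (indeed it is the projection of an $L^1$ function onto lines, which for a disk-supported $L^1$ density has a $1/2$-power modulus of continuity near the boundary of the projection interval by the square-root geometry of chords), so the one-dimensional Fourier transform of $g_l$ — and hence $F_l$ — decays like $O(b^{-1/2})$, and the strict $o$ comes again from Riemann–Lebesgue applied to the fractional-derivative representative; the exponential factor $e^{-\tau x_1}$ only improves matters and contributes the uniformity in $\tau$ exactly as above. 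The main obstacle, and the step I would spend the most care on, is the \emph{uniformity} in the imaginary part $\tau$: the naive Riemann–Lebesgue lemma is pointwise in the parameter, so one must package the $\tau$-dependence as a norm-continuous $L^1$-valued curve and invoke compactness — and one must separately check that the square-root regularity of the line-projection (the source of the $b^{-1/2}$) is not destroyed by multiplication by the bounded smooth weight $e^{-\tau x_1}$, which it is not, since that weight is Lipschitz in $x_1$ uniformly for $\tau\in[0,a]$.
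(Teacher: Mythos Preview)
Your compactness argument for the uniformity in $\tau=\operatorname{Im}\lambda$ is a genuine alternative to the paper's route and in some ways cleaner: the paper establishes the $o(b^{-1/2})$ decay only for each fixed $\tau$ (via Riemann--Lebesgue on the two boundary lines $\tau=0$ and $\tau=a$), and then upgrades to uniformity by observing that $F_l$ is analytic, hence its real and imaginary parts are harmonic and bounded in the strip, and invoking a Phragm\'en--Lindel\"of--type estimate proved in the appendix of another paper. Your device of packaging the $\tau$-dependence as a norm-continuous $L^1$-valued curve on the compact interval $[0,a]$ and running Riemann--Lebesgue once through an $\varepsilon$-net avoids that detour entirely.

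Where your proposal has a real gap is the \emph{rate} $o(b^{-1/2})$. You first suggest ``integrate by parts once in $x_1$'', but one integration by parts produces a factor $b^{-1}$, not $b^{-1/2}$, and in any case requires differentiability you have not established. You then switch to claiming the $x_1$-marginal $g_l$ is H\"older-$1/2$; but the parenthetical justification --- that the line projection of a disk-supported $L^1$ density has a $1/2$-power modulus of continuity by chord geometry --- is false for $L^1$ densities (their marginals are merely $L^1$ and need not even be bounded), and even for bounded $f_l$ the square-root chord length only controls $g_l$ \emph{at the endpoints} $x_1=\pm R_1$, not in the interior. Endpoint H\"older behavior alone does not force $\widehat{g_l}(b)=O(b^{-1/2})$; an interior jump in $g_l$ (permitted by a merely bounded $f_l$) would give only $O(b^{-1})$-type oscillatory contributions superimposed on slower $o(1)$ decay, and you have no mechanism to rule that out. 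The appeal to a ``fractional-derivative representative'' is not substantiated.

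The paper sidesteps all of this by working directly with the one-dimensional Hankel integral \eqref{E:hankel1} and inserting the large-argument asymptotic $J_{|l|}(z)=\sqrt{2/(\pi z)}\,[\cos(z-\tfrac{|l|\pi}{2}-\tfrac{\pi}{4})+e^{|\operatorname{Im}z|}\mathcal{O}(|z|^{-1})]$. The factor $\sqrt{2/(\pi\lambda r)}$ produces the $b^{-1/2}$ explicitly, and what remains is $\int_0^{R_1}\sqrt{r}\,f_l(r)\cos(\lambda r-\cdots)\,dr$, to which Riemann--Lebesgue applies under the sole hypothesis $\sqrt{r}f_l(r)\in L^1[0,R_1]$ --- no regularity of any marginal is needed. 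If you want to repair your argument, the cleanest fix is to abandon the 2D-marginal picture for the rate and import this Bessel-asymptotic step; your compactness argument can then replace the paper's harmonic-function step for the uniformity.
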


\begin{proof}
In addition to the asymptotic decay for large values of the argument mentioned
above, Bessel function $J_{n}(z)$ is bounded for any integer index $n$ and any
$z\in D.$ This can be easily deduced, for example, from the well-known
representation of the Bessel function (\cite{AbrSteg}, formula 9.1.21)%
\[
J_{n}(z)=\frac{1}{2\pi i^{n}}\int\limits_{0}^{2\pi}e^{iz\cos\phi}e^{in\phi
}d\phi,
\]
leading to the estimate%
\[
|J_{n}(z)|\leq e^{|\operatorname{Im}z|}.
\]
This fact can be used to derive an estimate on $F_{l}(\lambda).$ Taking into
account finite support of $f_{l}(r),$ from the definition of $F_{l}(\lambda)$
we obtain%
\begin{equation}
|F_{l}(\lambda)|=\left\vert \int\limits_{0}^{R_{1}}rf_{l}(r)J_{|l|}(\lambda
r)\,dr\right\vert \leq\int\limits_{0}^{1/\sqrt{b}}r|f_{l}(r)|\left\vert
J_{|l|}(\lambda r)\right\vert \,dr+\left\vert \int\limits_{1/\sqrt{b}}^{R_{1}%
}rf_{l}(r)J_{|l|}(\lambda r)\,dr\right\vert .\label{E:twointeg}%
\end{equation}
Since both $f_{l}(r)$ and$\ J_{|l|}(\lambda r)$ are bounded, the first
integral in the right hand side decays as $\mathcal{O(}b^{-1})$:
\begin{equation}
\int\limits_{0}^{1/\sqrt{b}}r|f_{l}(r)||J_{|l|}(\lambda r)|dr\leq\max_{0\leq
r\leq R_{1}}|f_{l}(r)|e^{aR_{1}}\int\limits_{0}^{1/\sqrt{b}}r\,dr=\mathcal{O}%
\left(  b^{-1}\right)  \text{ as }b\rightarrow\infty.\label{E:simple-est}%
\end{equation}
The last integral in (\ref{E:twointeg}) over the interval $[1/\sqrt{b},R_{1}]$
can be estimated by using the large argument asymptotics (\ref{E:bessasy}),
since $|\lambda r|\geq\sqrt{b}$ on this interval:%
\begin{align}
\left\vert \int\limits_{1/\sqrt{b}}^{R_{1}}rf_{l}(r)J_{|l|}(\lambda
r)dr\right\vert  &  =\left\vert \sqrt{\frac{2}{\pi\lambda}}\right\vert
\left\vert \int\limits_{1/\sqrt{b}}^{R_{1}}\sqrt{r}f_{l}(r)\left[  \cos\left(
\lambda r-\frac{|l|\pi}{2}-\frac{\pi}{4}\right)  +e^{|\operatorname{Im}%
(\lambda r)|}\mathcal{O}(|\lambda r|^{-1})\right]  dr\right\vert \nonumber\\
&  \leq\left\vert \sqrt{\frac{2}{\pi b}}\right\vert \left[  \left\vert
\int\limits_{1/\sqrt{b}}^{R_{1}}\sqrt{r}f_{l}(r)\cos\left(  \lambda
r-\frac{|l|\pi}{2}-\frac{\pi}{4}\right)  dr\right\vert +e^{aR_{1}}%
\int\limits_{1/\sqrt{b}}^{R_{1}}\sqrt{r}|f_{l}(r)|\mathcal{O}\left(  \frac
{1}{\sqrt{b}}\right)  dr\right]  \nonumber\\
&  =\left\vert \sqrt{\frac{2}{\pi b}}\right\vert \left\vert \int%
\limits_{1/\sqrt{b}}^{R_{1}}\sqrt{r}f_{l}(r)\cos\left(  \lambda r-\frac
{|l|\pi}{2}-\frac{\pi}{4}\right)  dr\right\vert +\mathcal{O}\left(  \frac
{1}{b}\right)  .\label{E:first-est}%
\end{align}
Next, by adding another small term (uniformly decaying as $\mathcal{O}\left(
\frac{1}{b}\right)  ),$ the integral in (\ref{E:first-est}) can be extended to
the whole interval $[0,R_{1}]$:%
\begin{equation}
\left\vert \int\limits_{1/\sqrt{b}}^{R_{1}}rf_{l}(r)J_{|l|}(\lambda
r)dr\right\vert \leq\left\vert \sqrt{\frac{2}{\pi b}}\right\vert \left\vert
\int\limits_{0}^{R_{1}}\sqrt{r}f_{l}(r)\cos\left(  \lambda r-\frac{|l|\pi}%
{2}-\frac{\pi}{4}\right)  dr\right\vert +\mathcal{O}\left(  \frac{1}%
{b}\right)  .\label{E:second-est}%
\end{equation}
Suppose that $\lambda=b+i\alpha,$ with $\alpha\in\lbrack0,a].$ For a fixed
$\alpha$ the last integral in (\ref{E:second-est}) can be re-written as a
combination of a sine and cosine Fourier transforms of a finitely supported
integrable functions $\cosh(\alpha r)\sqrt{r}f_{l}(r)$ and $\sinh(\alpha
r)\sqrt{r}f_{l}(r)$:%
\begin{align*}
\left\vert \int\limits_{0}^{R_{1}}\sqrt{r}f_{l}(r)\cos\left[  (b+i\alpha
)r-\frac{|l|\pi}{2}-\frac{\pi}{4}\right]  dr\right\vert  &  \leq\left\vert
\int\limits_{0}^{R_{1}}\sqrt{r}f_{l}(r)\cos\left(  rb-\frac{|l|\pi}{2}%
-\frac{\pi}{4}\right)  \cosh(\alpha r)dr\right\vert \\
&  +\left\vert \int\limits_{0}^{R_{1}}\sqrt{r}f_{l}(r)\sin\left(
rb-\frac{|l|\pi}{2}-\frac{\pi}{4}\right)  \sinh(\alpha r)dr\right\vert .
\end{align*}
The well-known Riemann-Lebesgue lemma then implies that for a fixed $\alpha$%
\begin{equation}
\int\limits_{0}^{R_{1}}\sqrt{r}f_{l}(r)\cos\left(  \lambda r-\frac{|l|\pi}%
{2}-\frac{\pi}{4}\right)  dr\rightarrow0\text{ as }b\rightarrow\infty
.\label{E:third-est}%
\end{equation}
We cannot claim yet that this decay is uniform with respect to
$\operatorname{Im}\lambda$. However, (\ref{E:third-est}) holds, in particular
for $\alpha=0$ and $\alpha=a,$ which correspond to the lower and upper
boundaries $\partial D_{\mathrm{lower}}$ and $\partial D_{\mathrm{upper}}$ of
the strip $D.$ Now, by combining estimates (\ref{E:twointeg}%
)-(\ref{E:third-est}), one obtains%
\begin{equation}
|F_{l}(b+i\alpha)|=o\left(  b^{-1/2}\right)  ,\text{ as }b\rightarrow
\infty,\label{E:conver2}%
\end{equation}
for every fixed $\alpha$ $\in\lbrack0,a].$ It follows that the real and
imaginary parts of the function $F_{l}(\lambda)$ are harmonic functions within
the strip $D$, vanishing at infinity. The values of these functions on
$\partial D_{\mathrm{lower}}$ and $\partial D_{\mathrm{upper}}$ decrease as
$o\left(  b^{-1/2}\right)  $ when $b\rightarrow\infty.$ Now the uniform decay
within $D$ can be established by applying the proof presented in the Appendix
of \cite{Kun-3D}, resulting in the estimate%
\begin{equation}
|F_{l}(\lambda)|=o\left(  |\operatorname{Re}\lambda|^{-1/2}\right)  ,\text{
}0\leq\operatorname{Im}\lambda\leq a,\text{ as }\operatorname{Re}%
\lambda\rightarrow\infty.\label{E:conver3}%
\end{equation}

\end{proof}

Now vanishing of the integral in (\ref{E:limit}) is guaranteed by combining
(\ref{E:conver3}) with (\ref{E:bessasy}) and we thus proved the following

\begin{theorem}
Integration over the positive real axis in (\ref{E:norton}) is equivalent to
integration over contour $C$ in (\ref{E:contourint}).
\end{theorem}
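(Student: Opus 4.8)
The plan is to prove this theorem by a contour-deformation argument in the semi-infinite strip $D$, using the decay estimate \eqref{E:conver3} together with the Bessel asymptotics \eqref{E:bessasy} to dispose of the ``vertical'' pieces at infinity. Concretely, I would fix $l\in\mathbb{Z}$ and $r\in[0,R_1]$, and consider the integrand $\Psi(\lambda)\equiv F_{l}(\lambda)J_{|l|}(\lambda r)\lambda$, which by the Proposition is entire in $\lambda$ (the ratio defining $F_l$ has only a removable singularity at $\lambda=0$, and for $l=0$ no singularity at all). Apply Cauchy's theorem to $\Psi$ on the boundary of the rectangle with vertices $0$, $b$, $b+ia$, $ia$: the integral over this closed contour vanishes. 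As $b\to\infty$ this rectangle ``opens up'' into the region between the positive real axis and the contour $C$ (which consists of the segment $[0,ia]$ followed by the horizontal ray $[ia,ia+\infty)$).

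The key steps, in order, are: (1) write out Cauchy's theorem on the rectangle $[0,b]\cup[b,b+ia]\cup[b+ia,ia]\cup[ia,0]$, so that $\int_0^b \Psi\,d\lambda = \int_0^{ia}\Psi\,d\lambda + \int_{ia}^{b+ia}\Psi\,d\lambda - \int_{b}^{b+ia}\Psi\,d\lambda$; (2) observe that the left side tends to the right-hand side of \eqref{E:norton} and the first two terms on the right tend to the right-hand side of \eqref{E:contourint} (the integral over $C$) as $b\to\infty$; (3) show the remaining vertical piece $\int_{b}^{b+ia}\Psi(\lambda)\,d\lambda\to 0$, which is precisely the limit \eqref{E:limit}. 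For step (3) I would combine the bound $|J_{|l|}(\lambda r)|\le \sqrt{2/(\pi b)}\,e^{aR_1}[1+\mathcal{O}(1/b)]$ established just before the second Proposition, the factor $|\lambda|\le b+a$, the decay $|F_l(\lambda)|=o(b^{-1/2})$ from \eqref{E:conver3} valid uniformly for $0\le\operatorname{Im}\lambda\le a$, and the fact that the segment $[b,b+ia]$ has length $a$; multiplying these gives $o(b^{-1/2})\cdot\mathcal{O}(b^{-1/2})\cdot\mathcal{O}(b)\cdot\mathcal{O}(1)=o(1)$, so the vertical integral vanishes. One must also check convergence of the improper integrals themselves along $\mathbb{R}_{+}$ and along the horizontal ray $[ia,ia+\infty)$ of $C$, which follows from the same estimates since on the ray $\operatorname{Im}\lambda=a$ is fixed and \eqref{E:conver3} applies.

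The main obstacle is bookkeeping at the two ``corners'' of the picture, $\lambda=0$ and $\lambda=\infty$, rather than anything deep. Near $\lambda=0$ one should note that although the contour $C$ passes through the origin, the integrand $\Psi$ is continuous there (removable singularity of $F_l$, and $J_{|l|}(\lambda r)\lambda\to 0$ for $l\neq 0$), so no indentation is needed and the finite segment $[0,ia]$ contributes an ordinary Riemann integral; the actual numerical regularization near $\lambda=0$ is deferred to Section~\ref{S:regularization} and is not part of this equivalence statement. At infinity the only subtlety is that \eqref{E:conver3} gives uniform decay in the full strip, not merely on its two horizontal edges — but that uniform version is exactly what the previous Proposition supplies, so the estimate in step (3) is legitimate. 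Assembling these pieces yields $\int_0^\infty \Psi\,d\lambda = \int_C \Psi\,d\lambda$, i.e.\ the equivalence of \eqref{E:norton} and \eqref{E:contourint}, completing the proof.
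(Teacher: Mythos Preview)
Your proposal is correct and follows essentially the same approach as the paper: both arguments rest on the analyticity of $F_l$ in the strip $D$, reduce the contour equivalence to the vanishing of the vertical segment integral \eqref{E:limit}, and then dispatch \eqref{E:limit} by combining the Bessel asymptotic bound with the uniform $o(b^{-1/2})$ decay \eqref{E:conver3} from the second Proposition. You have simply made explicit the Cauchy-theorem rectangle argument that the paper leaves implicit in the sentence ``it is enough to show that \eqref{E:limit},'' and added some welcome remarks on the removable singularity at $\lambda=0$ and on convergence of the improper integrals.
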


\subsection{Regularization\label{S:regularization}}

The contour deformation described in the previous section eliminates the
division by small values in the denominator of (\ref{E:ratio}) for all values
of $\lambda,$ except $\lambda=0.$ Unlike other zeros, at $\lambda=0$ Bessel
function $J_{\nu}(\lambda)$ has a root of multiplicity $|\nu|$ (we consider
only integer $\nu)$. In addition $J_{\nu}(t)$ remains small (much less than
one) in the interval $t\in\lbrack0,\nu(1-\varepsilon))]$ (with the value of
$\varepsilon$ depending on the desired notion of \textquotedblleft
smallness\textquotedblright). A more quantitative description of this decay is
given by the first term of the Debye asymptotics (\cite{AbrSteg}, formula
9.3.7):%
\[
J_{\nu}(\nu\operatorname{sech}\alpha)\thicksim\frac{1}{\sqrt{2\pi\nu
\tanh\alpha}}\exp[\nu(\tanh\alpha-\alpha)],\quad\alpha>0,\quad\nu
\rightarrow+\infty.
\]
It follows that in any direction $x=k\nu,$ with $|k|<1,$ values of $J_{\nu
}(x)$ decay exponentially as $x\rightarrow\infty.$

Such behavior of Bessel functions results in extremely small values of
$F_{l}(\lambda)$ and $\hat{g}_{l}(\lambda)$ for certain values of $\lambda.$
Since $f_{l}(r)$ are finitely supported in the interval $[0,R_{1}]$, equation
(\ref{E:hankel1}) shows that $F_{l}(\lambda)$ become very small for values of
$|l|\geq\lambda R_{1}.$ For simplicity we neglected here the narrow transition
zone. We also notice that this effect is preserved when $\lambda$ is slightly
shifted into the imaginary direction ($|l|\gg1\geq|\operatorname{Im}%
\lambda|).$ In other words, the vanishing values lie within the cone $|l|\geq
R_{1}\operatorname{Re}\lambda.$

We notice, parenthetically, that this effect is closely related to the
well-known \textquotedblleft bow-tie" shape of the Fourier spectrum of
projections in classical X-ray tomography. Functions $F_{l}(\lambda)$ can be
understood as the Fourier coefficients obtained by expanding the Fourier
transform $\hat{f}(\xi)$ of $f(x)$ represented in polar coordinates in the 1D
Fourier series in the angular coordinate. It follows that if $f(x)$ is
supported within a circle of radius $R_{1}$ then there is little energy in the
cone $|l|\geq R_{1}\lambda$; these values do not need to be computed and can
be set to zero without much effect on the reconstructed image.

However, functions $\hat{g}_{l}(\lambda)$ given by (\ref{E:hankel}) have an
additional Bessel factor $J_{|l|}(\lambda R_{0})$ in front of the integral.
According to the asymptotic behavior of the Bessel functions, the exact values
of $\hat{g}_{l}(\lambda)$ will become very small in the larger cone $|l|\geq
R_{0}\operatorname{Re}\lambda.$ Since we need to compute (\ref{E:ratio}) from
the approximately known values of $\hat{g}_{l}(\lambda),$ the division within
the cone $|l|\geq R_{0}\operatorname{Re}\lambda$ is an ill-posed operation and
should be avoided. The regularization step of the algorithm consists in
replacing values of $F_{l}(\lambda)$ by zeros within the cone $|l|\geq
R_{0}\operatorname{Re}\lambda.$ While values of $F_{l}(\lambda)$ within the
smaller region of $|l|\geq R_{1}\operatorname{Re}\lambda$ are very small and
can be set to zero without noticeably affecting the image, the additional loss
of values in the region $R_{0}\operatorname{Re}\lambda\leq|l|\leq
R_{1}\operatorname{Re}\lambda$ will cause the disappearance of certain
material interfaces (or wavefronts).

It is interesting to investigate which wavefronts will remain in the image and
which will be smoothed out. Notice that the reconstructed image is obtained by
combining equations (\ref{E:ffourier}) and (\ref{E:contourint}), in other
words
\[
f(x(r,\theta))=\sum_{l=-\infty}^{\infty}\int\limits_{C}F_{l}(\lambda
)J_{|l|}(\lambda r)e^{il\theta}\lambda d\lambda.
\]
Setting to zero certain values of $F_{l}(\lambda)$ is equivalent to removing
from the reconstructed image components in the form $J_{|l|}(\lambda
r)e^{il\theta}$ for certain values of $\lambda$ (for simplicity we will ignore
in the following crude analysis the small imaginary part of $\lambda$).

Let us analyze local behavior of a function $J_{|l|}(\lambda r)e^{il\theta}$
in a small neighborhood $N$ of point $p$ as shown in Figure 1. (Due to the
rotational symmetry of the acquisition scheme it is enough to understand this
behavior only for the points located on the vertical axis). Within $N$ we have
$x_{1}\thickapprox R\theta,$ so that the angular exponent $e^{il\theta}$ can
be approximated by $\exp(ilx_{1}/R),$ where $x=(x_{1},x_{2})$. The function
$u(x(r,\theta))\equiv J_{|l|}(\lambda r)e^{il\theta}$ solves the Helmholtz
equation%
\[
\Delta u+\lambda^{2}u=0.
\]
Locally the dependence of this function on $x_{1}$ can be approximated by
$\exp(ilx_{1}/R).$ Then $u(x)$ can be approximated by the formula
\begin{equation}
u(x)\thickapprox\exp(ilx_{1}/R)(c_{1}\exp(i\gamma x_{2})+c_{2}\exp(-i\gamma
x_{2})) \label{E:twowaves}%
\end{equation}
with some constants $c_{1}$ and $c_{2},$ and with the vertical frequency
$\gamma$ satisfying the condition%
\[
\frac{l^{2}}{R^{2}}+\gamma^{2}=\lambda^{2}.
\]
The two plane waves given by~(\ref{E:twowaves}) propagate in the directions
given by the wave vectors $v^{+}$ and $v^{-}$:
\[
v^{\pm}=(l/R,\pm\gamma)=\lambda(\cos\theta,\sin(\pm\theta)),
\]
where the angle $\theta$ between $v^{+}$ and the horizontal axis is given by
the condition
\[
\cos\theta=\frac{l}{\lambda R}.
\]

Now, if components $J_{|l|}(\lambda r)e^{il\theta}$ with $\lambda R_{0}%
\leq|l|$ are filtered out by the regularization step of the algorithm, the
boundary of the cone of excluded directions are given by the condition%
\[
\lambda R_{0}=|l|
\]
or%
\[
\cos\theta_{0}=\frac{\lambda R_{0}}{\lambda R}=\frac{R_{0}}{R}.
\]
However, this condition coincides with the visibility condition discussed in
the beginning of Section~\ref{S:CMT}. It follows that our regularization
technique removes from the image plane waves propagating in the
\textquotedblleft invisible\textquotedblright\ directions but keeps all the others.

\section{Inverse problem for IVUS and the CMT\label{S:Born}}

While the inverse problem of IVPA is reduced to the inverse problems for the
CMT in a rather straightforward way (see Section \ref{S:formulation}), the
situation in IVUS is more complicated. The important difference between these
modalities lies in the duration of the excitation. In IVPA the acoustic wave
is excited by a short laser pulse; it's duration is much shorter than the time
needed for an acoustic wave to cover a distance compared to the desired
resolution of the image. Thus, the forward problem can be modelled by the wave
equation without sources (the source term is absorbed in the initial condition
at $t=0).$ In the IVUS, on the other hand, the reflected acoustic wave that
carries the desired information is generated by the incoming excitation wave
initiated by the transducer; both waves propagate with the same speed. In
addition, while in the IVPA the speed of sound can be assumed constant and
known, in the IVUS the variations in the speed of sound is the information one
wants to reconstruct. This makes the inverse problem non-linear; we will
linearize it by using the Born approximation. In this section we show that
under the latter approximation the circular means of the speed of sound
(centered at each transducer) can be reconstructed from the measurements by
solving a certain Volterra integral equation. After the circular means are
found, the exterior problem for the CMT can be solved by \ the methods
presented in the previous section.

We will assume that each transducer is infinitely long and the measurements
are made by each transducer sequentially and independently from the others. A
transducer initiates an outgoing excitation wave $u_{\mathrm{exc}%
}(t,\mathbf{x})$ and then switches into the recording mode. The transducers
are presumed to be infinitely thin, so that after the initial pule is
generated, they do not perturb the propagation of acoustic waves. Under these
assumptions function $u_{\mathrm{exc}}(t,\mathbf{x})$ satisfies the free-space
wave equation
\begin{equation}
\frac{\partial^{2}}{\partial t^{2}}u_{\mathrm{exc}}(t,\mathbf{x}%
)=c^{2}(\mathbf{x})\Delta u_{\mathrm{exc}}(t,\mathbf{x}),\qquad\mathbf{x}%
\equiv(x_{1},x_{2},x_{3})\in\mathbb{R},\qquad t\in(0,\infty).
\label{E:full-wave}%
\end{equation}
We will assume that the speed of sound $c(\mathbf{x})$ is close to a constant
and this constant (by choosing proper physical units) can be made equal to
unity. In other words%
\begin{equation}
c^{2}(\mathbf{x})=1+m(\mathbf{x}),\qquad|m(\mathbf{x})|\ll1. \label{E:m-x}%
\end{equation}
Now (\ref{E:full-wave}) can be re-written in the following form:
\begin{equation}
\frac{\partial^{2}}{\partial t^{2}}u_{\mathrm{exc}}(t,\mathbf{x}%
)=(1+m(\mathbf{x}))\Delta u_{\mathrm{exc}}(t,\mathbf{x}),\qquad\mathbf{x}%
\in\mathbb{R}^{3},\qquad t\in(0,\infty). \label{E:full-wave1}%
\end{equation}
Let us also consider the solution $u_{0}(t,\mathbf{x})$ of the wave equation
in a homogeneous media (excited by the same transducer). It satisfies the
homogeneous wave equation%
\begin{equation}
\frac{\partial^{2}}{\partial t^{2}}u_{0}(t,\mathbf{x})=\Delta u_{0}%
(t,\mathbf{x}),\qquad\mathbf{x}\in\mathbb{R}^{3},\qquad t\in(0,\infty).
\label{E:homo}%
\end{equation}
Now the difference $w(t,\mathbf{x})=u_{\mathrm{exc}}(t,\mathbf{x}%
)-u_{0}(t,\mathbf{x})$ satisfies the equation
\[
\frac{\partial^{2}}{\partial t^{2}}w(t,\mathbf{x})=\Delta w(t,\mathbf{x}%
)+m(\mathbf{x})\Delta u_{0}(t,\mathbf{x})+m(\mathbf{x})\Delta w(t,\mathbf{x}%
),\qquad\mathbf{x}\in\mathbb{R}^{3},\qquad t\in(0,\infty).
\]
The Born approximation results from neglecting the last term in the above
equation (this is a second order term with respect to $m(\mathbf{x})$ since
$\Delta w$ has the same order as $m(\mathbf{x})).$ Taking into account
(\ref{E:homo}) we obtain%
\begin{align}
\frac{\partial^{2}}{\partial t^{2}}w(t,\mathbf{x})  &  =\Delta w(t,\mathbf{x}%
)+m(\mathbf{x})\Delta u_{0}(t,\mathbf{x})\nonumber\\
&  =\Delta w(t,\mathbf{x})+m(\mathbf{x})\frac{\partial^{2}}{\partial t^{2}%
}u_{0}(t,\mathbf{x}),\qquad\mathbf{x}\in\mathbb{R}^{3},\qquad t\in(0,\infty).
\label{E:born}%
\end{align}

Our goal is to reconstruct from the measurements some information about
$m(\mathbf{x}).$ Since the transducers we model do not have any resolution in
the vertical direction, we will only be able to partially reconstruct the
integrals $\overline{m}(x_{1},x_{2})$ of $m(\mathbf{x})$ in $x_{3}$:
\[
\overline{m}(x_{1},x_{2})=\int\limits_{\mathbb{R}}m(\mathbf{x})dx_{3}.
\]
The first step towards this goal is to reconstruct from the measurements
obtained by one transducer (without loss of generality assumed to be located
along the $x_{3}$ axis) the circular averages $M(r)$ of $\overline{m}%
(x_{1},x_{2})$ defined as follows%
\[
M(r)=\int\limits_{0}^{2\pi}\overline{m}(r\cos\theta,r\sin\theta)d\theta.
\]

If the wave $u_{0}(t,\mathbf{x})$ is excited by an infinitely thin transducer
lying on the $Ox_{3}$ axis, $u_{0}(t,\mathbf{x})$ should be invariant with
respect to $x_{3},$ and invariant with the respect to rotations about the axis
$Ox_{3}.$ We will represent $u_{0}$ with the help of the Green's function
$G_{2D}(t,x_{1},x_{2})$ of the two-dimensional wave equation in $\mathbb{R}%
^{2}$ satisfying the radiation condition at infinity and given by equations
(\ref{E:green-simple1}) and (\ref{E:phi-def1}):
\begin{align}
u_{0}(t,\mathbf{x)}  &  \mathbf{=}u_{0}^{\ast}\left(  t,\sqrt{x_{1}^{2}%
+x_{2}^{2}}\right)  \equiv\int_{\mathbb{R}}\varphi(\tau)G_{2D}(t-\tau
,x_{1},x_{2})d\tau\label{E:to2D}\\
&  =\int_{\mathbb{R}}\varphi(\tau)\Phi\left(  t-\tau,\sqrt{x_{1}^{2}+x_{2}%
^{2}}\right)  d\tau\nonumber\\
&  =\int_{\mathbb{R}}\varphi(\tau)\Phi\left(  t-\tau,r(\mathbf{x})\right)
d\tau, \label{E:exc}%
\end{align}
where $r(\mathbf{x})=\sqrt{x_{1}^{2}+x_{2}^{2},}$ and $\varphi(\tau)$ is a
$C^{\infty}$ function in $\mathbb{R}$ finitely supported within the interval
$[-a,0].$ The function $\varphi(\tau)$ is a delta-approximating function
describing the initial pressure on the surface of the transducer. It is
introduced in order to avoid some technical difficulties; further in this
section we will pass to the limit $a\rightarrow0$ and $\varphi(\tau
)\rightarrow\delta(\tau).$ We notice that for such a choice of $\varphi
(\tau),$ the function $u_{0}(t,\mathbf{x)}$ still solves the wave equation in
the whole space $\mathbb{R}^{2}$ for all $t\geq0,$ and this solution is
invariant with respect to $x_{3}$ (it depends only on $r(\mathbf{x})$ and
$t).$

Consider the free space Green's function $G_{3D}(t,\mathbf{x})$ of the 3D wave
equation satisfying the radiation condition at infinity%
\[
G_{3D}(t,\mathbf{x})=\frac{\delta(|\mathbf{x}|-t)}{4\pi t}.
\]
Using $G_{3D}(t,\mathbf{x}),$ solution of equation (\ref{E:born}) can be
re-written in the form%
\[
w(s,\mathbf{x})=\int\limits_{\mathbb{R}^{3}}\int\limits_{\mathbb{R}}\left[
m(\mathbf{y})\frac{\partial^{2}}{\partial t^{2}}u_{0}(t,\mathbf{y})\right]
G_{3D}(s-t,\mathbf{x-y})dtd\mathbf{y.}%
\]
Under an additional assumption that $m(\mathbf{x})$ is finitely supported in
space in $x_{3}$ (or that it decreases at infinity sufficiently fast), one can
consider integrals $\bar{w}(t,x_{1},x_{2})$ of $w(t,\mathbf{x})$ in $x_{3}$:%
\[
\bar{w}(t,x_{1},x_{2})\equiv\int\limits_{\mathbb{R}}w(t,\mathbf{x})dx_{3}%
\]

The transducer measures $\bar{w}(s,0,0),$ i.e. the integral of $w(s,\mathbf{x}%
)$ over the $Ox_{3}$ axis:%
\begin{align}
\bar{w}(s,0,0)  &  =\int\limits_{\mathbb{R}}\int\limits_{\mathbb{R}^{3}}%
\int\limits_{\mathbb{R}}\left[  m(\mathbf{y})\frac{\partial^{2}}{\partial
t^{2}}u_{0}(t,\mathbf{y})\right]  G_{3D}(s-t,(0,0,x_{3})-\mathbf{y}%
)dtd\mathbf{y}dx_{3}\nonumber\\
&  =\int\limits_{\mathbb{R}}\int\limits_{\mathbb{R}}\int\limits_{\mathbb{R}%
}\int\limits_{\mathbb{R}}\left[  m(\mathbf{y})\frac{\partial^{2}}{\partial
t^{2}}u_{0}(t,\mathbf{y})\right]  G_{2D}(s-t,y_{1},y_{2})dtdy_{1}dy_{2}dy_{3}
\label{E:four-ints}%
\end{align}
where we interchanged the order of integrations and made use of the fact that%
\[
G_{2D}(t,-y_{1},-y_{2})=G_{2D}(t,y_{1},y_{2})=\int\limits_{\mathbb{R}}%
G_{3D}(t,\mathbf{(}y_{1},y_{2},y_{3}\mathbf{)})dy_{3}.
\]
By using (\ref{E:to2D}) equation (\ref{E:four-ints}) can be further simplified
as follows%
\begin{align}
\bar{w}(s,0,0)  &  =\int\limits_{\mathbb{R}}\int\limits_{\mathbb{R}}%
\int\limits_{\mathbb{R}}\left[  \int\limits_{\mathbb{R}}m(\mathbf{y}%
)\frac{\partial^{2}}{\partial t^{2}}u_{0}^{\ast}\left(  t,\sqrt{y_{1}%
^{2}+y_{2}^{2}}\right)  dy_{3}\right]  G_{2D}(s-t,y_{1},y_{2})dtdy_{1}%
dy_{2}\nonumber\\
&  =\int\limits_{\mathbb{R}}\int\limits_{\mathbb{R}}\int\limits_{\mathbb{R}%
}\left[  \overline{m}(y_{1},y_{2})\frac{\partial^{2}}{\partial t^{2}}%
u_{0}^{\ast}\left(  t,\sqrt{y_{1}^{2}+y_{2}^{2}}\right)  \right]
G_{2D}(s-t,y_{1},y_{2})dtdy_{1}dy_{2}. \label{E:three-ints}%
\end{align}
By utilizing formula (\ref{E:green-simple1}) and by integrating
(\ref{E:three-ints}) in polar coordinates, $\bar{w}(s,0,0)$ can be expressed
in terms of the circular averages $M(r)$ as follows
\begin{align}
\bar{w}(s,0,0)  &  =\int\limits_{\mathbb{R}}\int\limits_{\mathbb{R}}%
\int\limits_{\mathbb{R}}\left[  \overline{m}(y_{1},y_{2})\frac{\partial^{2}%
}{\partial t^{2}}u_{0}^{\ast}\left(  t,\sqrt{y_{1}^{2}+y_{2}^{2}}\right)
\right]  \Phi\left(  s-t,\sqrt{y_{1}^{2}+y_{2}^{2}}\right)  dtdy_{1}%
dy_{2}\nonumber\\
&  =\int\limits_{0}^{2\pi}\int\limits_{0}^{\infty}\int\limits_{\mathbb{R}%
}\left[  \overline{m}(r\cos\theta,r\sin\theta)\frac{\partial^{2}}{\partial
t^{2}}u_{0}^{\ast}\left(  t,r\right)  \right]  \Phi\left(  s-t,r\right)
dtrdrd\theta\nonumber\\
&  =\int\limits_{0}^{\infty}\int\limits_{\mathbb{R}}\left[  M(r)\frac
{\partial^{2}}{\partial t^{2}}u_{0}^{\ast}\left(  t,r\right)  \right]
\Phi\left(  s-t,r\right)  dtrdr. \label{E:two-ints}%
\end{align}
\ \ The substitution of (\ref{E:exc}) into (\ref{E:two-ints}) results in the
following integro-differential equation relating circular averages $M(r)$ with
the measurements $\bar{w}(s,0,0)$:
\begin{equation}
\bar{w}(s,0,0)=\int\limits_{0}^{\infty}\int\limits_{\mathbb{R}}M(r)\left[
\frac{\partial^{2}}{\partial t^{2}}\int_{\mathbb{R}}\varphi(\tau)\Phi\left(
t-\tau,r\right)  d\tau\right]  \Phi\left(  s-t,r\right)  dtrdr.
\label{E:complicated}%
\end{equation}
Below we show that by a proper choice of $\varphi(\tau)$ the above equation
can be reduced to the Volterra integral equation of the second kind.

Let us consider the anti-derivative $\Psi\left(  t,r\right)  $ of $\Phi\left(
t,r\right)  $ in $t$:%
\[
\frac{\partial}{\partial t}\Psi\left(  t,r\right)  =\Phi\left(  t,r\right)
,\qquad\Phi\left(  t-\tau,r\right)  =-\frac{\partial}{\partial\tau}\Psi\left(
t-\tau,r\right)  .
\]
Then, by taking into account the finite support of $\varphi(\tau),$ the
expression in the brackets in (\ref{E:complicated}) can be transformed as
follows:%
\begin{align*}
\frac{\partial^{2}}{\partial t^{2}}\int_{\mathbb{R}}\varphi(\tau)\Phi\left(
t-\tau,r\right)  d\tau &  =\frac{\partial^{2}}{\partial t^{2}}\int%
_{\mathbb{R}}\varphi^{\prime}(\tau)\Psi\left(  t-\tau,r\right)  d\tau\\
&  =\frac{\partial}{\partial t}\int_{\mathbb{R}}\varphi^{\prime}(\tau
)\Phi\left(  t-\tau,r\right)  d\tau=\int_{\mathbb{R}}\varphi^{\prime\prime
}(\tau)\Phi\left(  t-\tau,r\right)  d\tau.
\end{align*}
Now (\ref{E:complicated}) can be re-written in the following form%
\begin{align*}
\bar{w}(s,0,0)  &  =\int\limits_{0}^{\infty}\int\limits_{\mathbb{R}%
}M(r)\left[  \int\limits_{\mathbb{R}}\varphi^{\prime\prime}(\tau)\Phi\left(
t-\tau,r\right)  d\tau\right]  \Phi\left(  s-t,r\right)  dtrdr\\
&  =\int\limits_{0}^{\infty}rM(r)\left(  \int\limits_{\mathbb{R}}%
\varphi^{\prime\prime}(\tau)\left[  \int\limits_{\mathbb{R}}\Phi\left(
t-\tau,r\right)  \Phi\left(  s-t,r\right)  dt\right]  d\tau\right)  dr\\
&  =\int\limits_{0}^{\infty}rM(r)\left(  \int\limits_{\mathbb{R}}%
\varphi^{\prime\prime}(\tau)\left[  \int\limits_{\mathbb{R}}\Phi\left(
t,r\right)  \Phi\left(  (s-\tau)-t,r\right)  dt\right]  d\tau\right)  dr\\
&  =\int\limits_{0}^{\infty}rM(r)\left(  \int\limits_{\mathbb{R}}%
\varphi^{\prime\prime}(\tau)K(r,s-\tau)d\tau\right)  dr
\end{align*}
where%
\begin{equation}
K(r,v)\equiv\int\limits_{\mathbb{R}}\Phi\left(  t,r\right)  \Phi\left(
v-t,r\right)  dt. \label{E:k-def}%
\end{equation}
Let us consider the case of $\varphi(\tau)$ equal to the Dirac's delta
function $\delta(\tau).$ To this end introduce a family of delta-approximating
functions $\varphi_{\alpha}(\tau)$ and the corresponding family of
measurements $\bar{w}_{\alpha}(s,0,0)$ so that%
\[
\lim_{\alpha\rightarrow0}\varphi_{\alpha}(\tau)=\delta(\tau).
\]
Then, taking the limit $\alpha\rightarrow0$ yields
\begin{align*}
\bar{w}_{\delta}(s,0,0)  &  \equiv\lim_{\alpha\rightarrow0}\bar{w}_{\alpha
}(s,0,0)=\int\limits_{0}^{\infty}rM(r)\left(  \int\limits_{\mathbb{R}}%
\delta^{\prime\prime}(\tau)K(r,s-\tau)d\tau\right)  dr\\
&  =\int\limits_{0}^{\infty}rM(r)\frac{\partial^{2}}{\partial s^{2}}K(r,s)dr,
\end{align*}
where the second derivative of $K(r,s)$ should be understood in the sense of
distributions. In fact, it will be more convenient for us to work with the
anti-derivative of the data $W_{\delta}(s)\equiv\int\bar{w}_{\delta
}(s,0,0)ds.$ The latter function is related to the averages $M(r)$ by the
equation%
\begin{equation}
W_{\delta}(s)=\int\limits_{0}^{\infty}rM(r)\frac{\partial}{\partial
s}K(r,s)dr, \label{E:nice}%
\end{equation}
where the derivative is, again, understood in the sense of distributions.

Let us investigate the behavior of $K(r,s).$ By combining (\ref{E:phi-def1})
and (\ref{E:k-def}) we obtain%
\[
K(r,s)=\int\limits_{\mathbb{R}}\Phi\left(  t,r\right)  \Phi\left(
s-t,r\right)  dt=\frac{1}{4\pi^{2}}\int\limits_{\mathbb{R}}\frac{H\left(
t,r\right)  }{\sqrt{t^{2}-r^{2}}}\frac{H\left(  s-t,r\right)  }{\sqrt
{(s-t)^{2}-r^{2}}}dt.
\]

We observe that if $s\leq2r,$ the numerator of the integrand identically
vanishes and $K(r,s)=0.$ Otherwise, if $s>2r,$%
\[
K(r,s)=\frac{1}{4\pi^{2}}\int\limits_{r}^{s-r}\frac{1}{\sqrt{(s-t)^{2}-r^{2}%
}\sqrt{t^{2}-r^{2}}}dt.
\]
By substitution $q=t-s/2$ this can be further simplified to
\[
4\pi^{2}K(r,s)=\int\limits_{0}^{\frac{s}{2}-r}\frac{2}{\sqrt{\left[  \left(
\frac{s}{2}-q\right)  ^{2}-r^{2}\right]  \left[  \left(  \frac{s}{2}+q\right)
^{2}-r^{2}\right]  }}dq=\int\limits_{0}^{\frac{s}{2}-r}\frac{2}{\sqrt{\left[
\left(  \frac{s}{2}-r\right)  ^{2}-q^{2}\right]  \left[  \left(  \frac{s}%
{2}+r\right)  ^{2}-q^{2}\right]  }}dq.
\]
Integration by parts, followed by substitution $\eta=q/(s/2-r)$ further yields%
\begin{align}
4\pi^{2}K(r,s)  &  =\frac{\pi}{\sqrt{2rs}}-2\int\limits_{0}^{s/2-r}%
\arcsin\left(  \frac{q}{s/2-r}\right)  \frac{q}{\sqrt{(s/2+r)^{2}-q^{2}}^{3}%
}dq\nonumber\\
&  =\frac{\pi}{\sqrt{2rs}}-2(s/2-r)^{2}\int\limits_{0}^{1}\frac{\eta
\arcsin\eta}{\sqrt{(s/2+r)^{2}-(s/2-r)^{2}\eta^{2}}^{3}}d\eta,\qquad s>2r.
\label{E:kernel-int}%
\end{align}
We notice that for $r>0$ the\ expression in the denominator of the last
integrand is bounded away from zero:%
\[
(s/2+r)^{2}-\eta^{2}(s/2-r)^{2}\geq(s/2+r)^{2}-(s/2-r)^{2}=4s^{2}r^{2}%
>16r^{4},\qquad s>2r.
\]
Thus, as $s$ approaches $2r,$ the second term in (\ref{E:kernel-int}) vanishes
as $\mathcal{O}\left(  (s/2-r)^{2}\right)  $. Moreover, it is easy to check
that the derivative (in $s)$ of this term also vanishes as $s$ approaches
$2r.$ Therefore, as a function of $s,$ $K(r,s)$ has a jump at $s=2r$ which can
be represented using the Heaviside function in the form $\frac{\pi}%
{2r}H(s,2r)$ with the remaining part continuous through the point $s=2r$:%
\begin{align*}
4\pi^{2}K(r,s)  &  =\frac{\pi}{2r}H(s,2r)+K_{1}(r,s),\\
K_{1}(r,s)  &  \equiv\left\{
\begin{array}
[c]{cc}%
0, & s\leq2r\\
4\pi^{2}K(r,s)-\frac{\pi}{2r}H(s,2r), & s>2r.
\end{array}
\right.
\end{align*}
Now, the derivative of $K(r,s)$ has form%
\[
\frac{\partial}{\partial s}K(r,s)=\frac{1}{8\pi r}\delta(s-2r)+\frac{1}%
{4\pi^{2}}\frac{\partial}{\partial s}K_{1}(r,s),
\]
where the second term is a bounded (although discontinuous at $s=2r)$
function. The second term can be extended by continuity to the point $s=2r.$
Substituting this expression in (\ref{E:nice}) yields
\[
W_{\delta}(s)=\frac{1}{8\pi s}M(s/2)+\frac{1}{4\pi^{2}}\int\limits_{0}%
^{s/2}rM(r)\frac{\partial}{\partial s}K_{1}(r,s)dr.
\]
This is a Volterra integral equation of the second kind, with a continuous
kernel. Equations of this type are well-posed and have unique solutions. They
are also easy to solve numerically. For example, the method of successive
approximations will converge unconditionally; it consists in computing the
next approximation $M_{(k+1)}$ from the previous one $M_{(k)}$ by the formula%
\[
M_{(k+1)}(s/2)=8\pi sW_{\delta}(s)-\frac{2s}{\pi}\int\limits_{0}^{s/2}%
rM_{(k)}(r)\frac{\partial}{\partial s}K_{1}(r,s)dr.
\]

Alternatively, our analysis of the kernel $\frac{\partial}{\partial s}K(r,s)$
suggests that, when properly discretized, equation (\ref{E:nice}) will reduce
to a well-posed system of linear equations, with a triangular matrix. We chose
this alternative to conduct numerical simulations described in the next
section. \begin{figure}[t]
\begin{center}
\subfigure[]{\includegraphics[width=1.7in,height=1.7in]{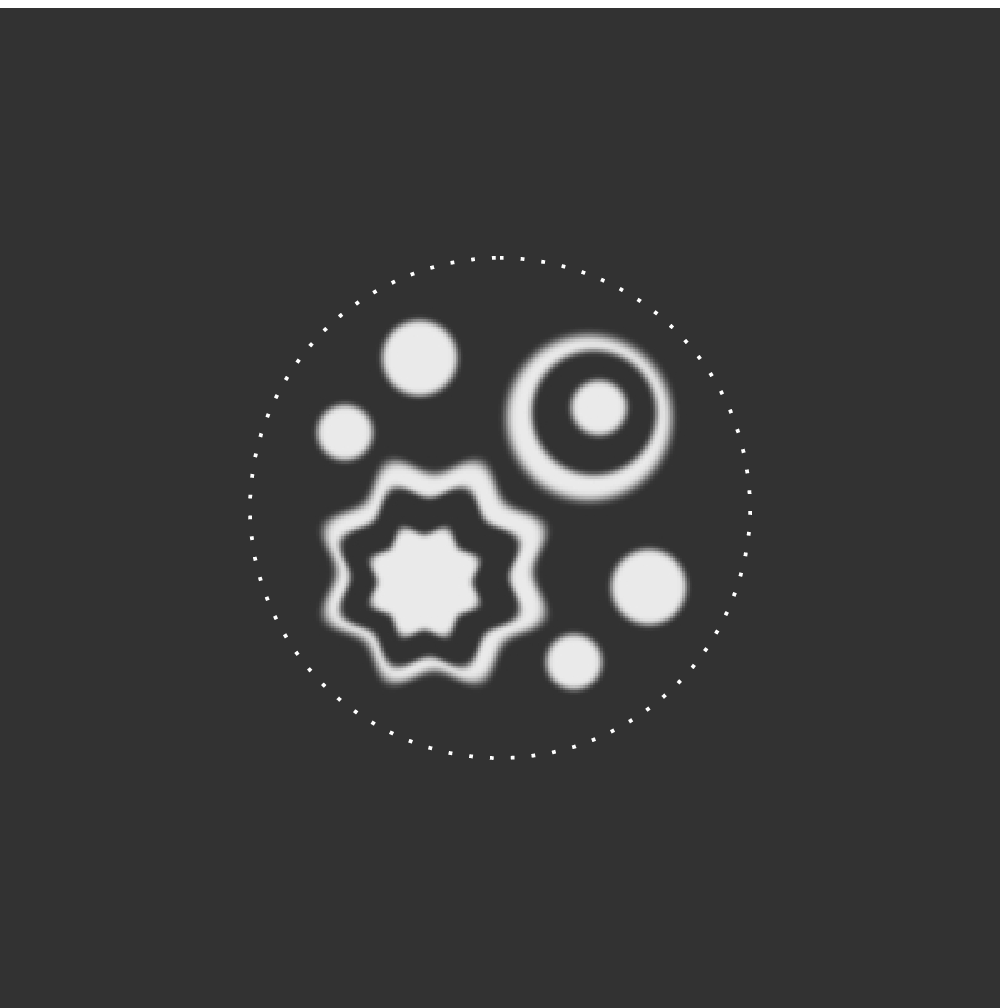}}
\subfigure[]{\includegraphics[width=1.7in,height=1.7in]{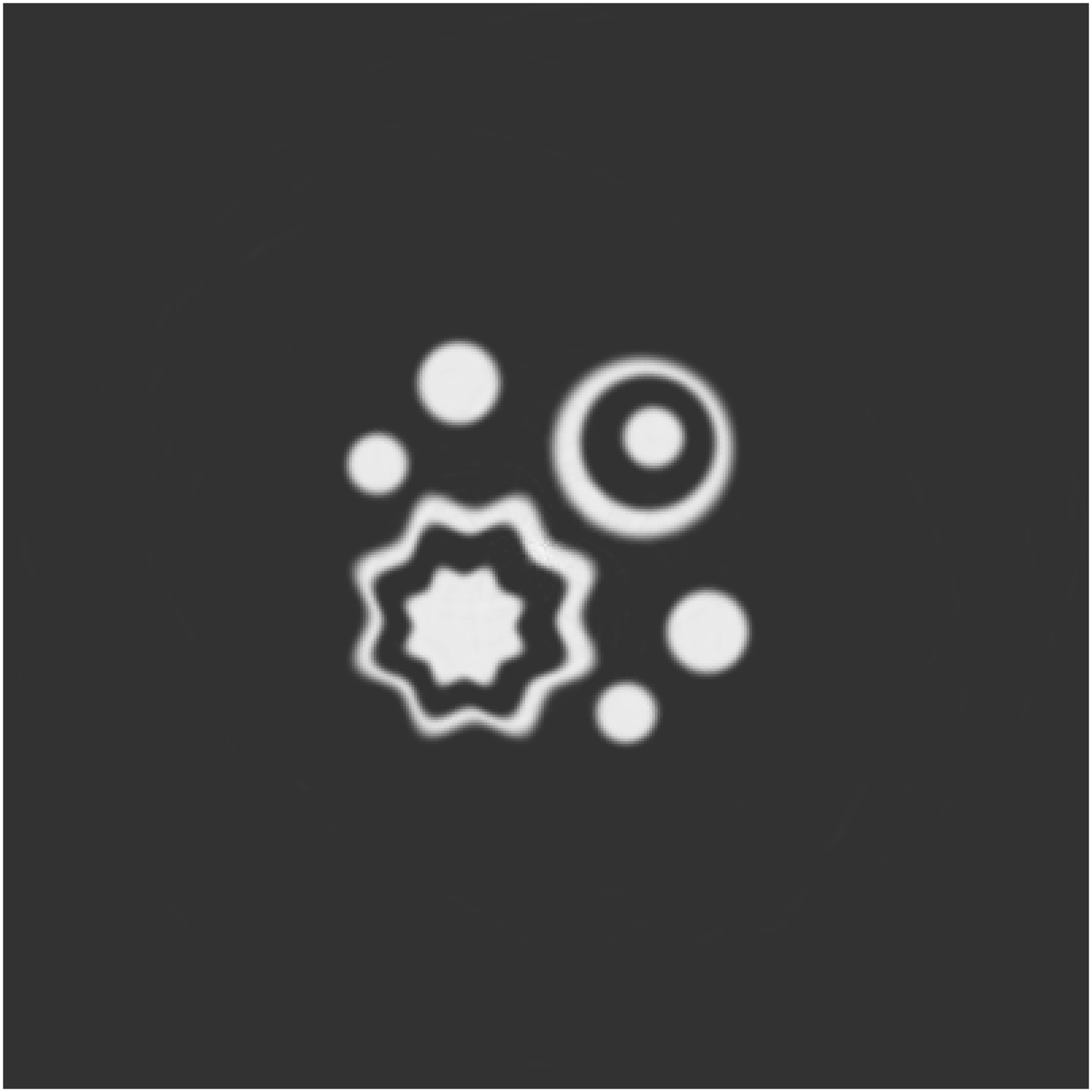}}\\
\subfigure[]{\includegraphics[width=1.7in,height=1.7in]{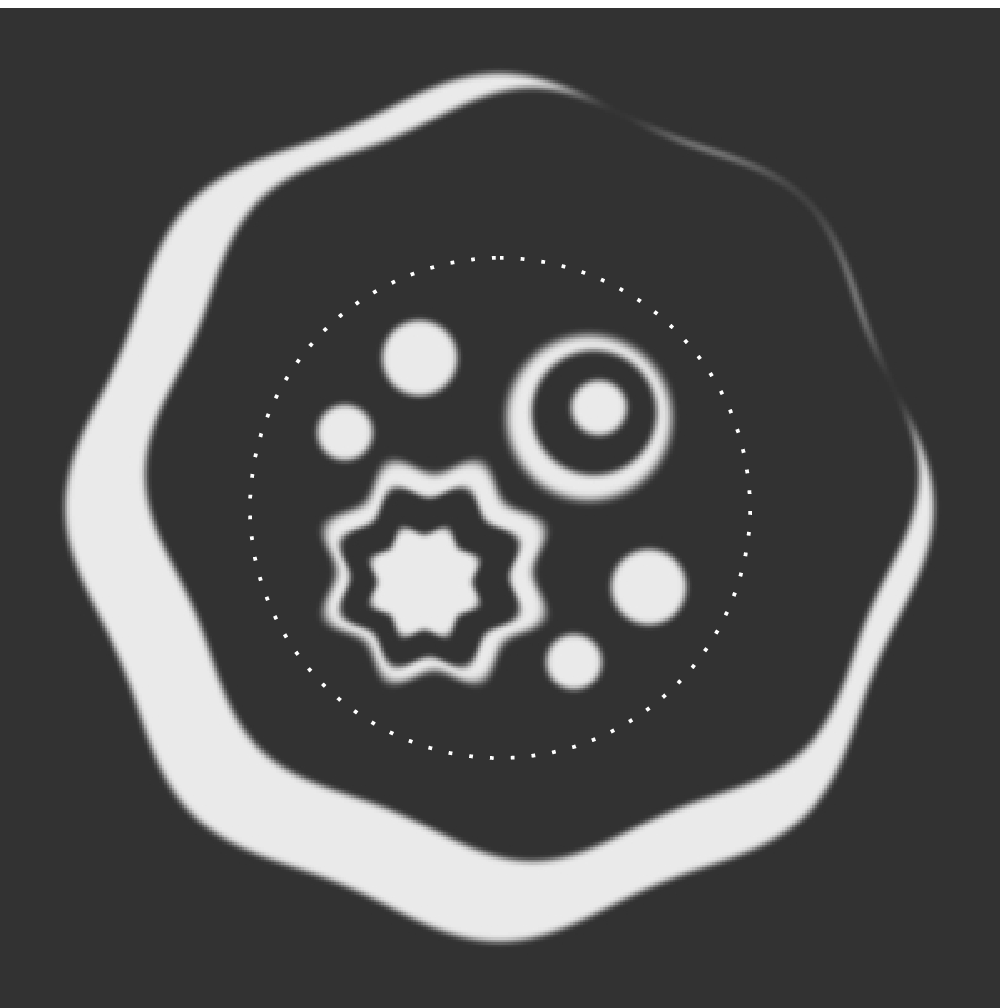}}
\subfigure[]{\includegraphics[width=1.7in,height=1.7in]{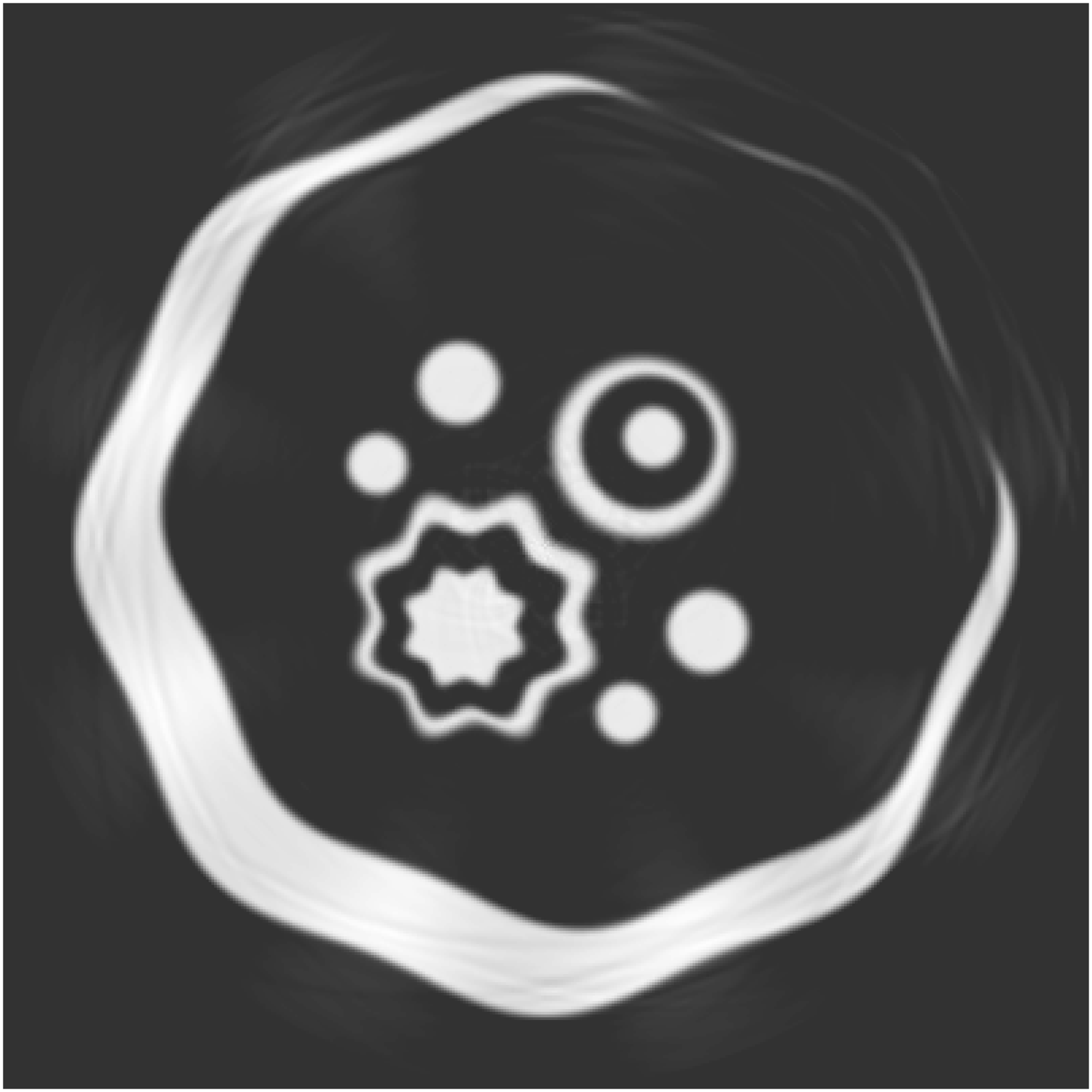}}
\end{center}
\caption{Simulation (a) phantom, interior problem (b) reconstruction (interior
problem) (c) phantom, interior/exterior problem (d) reconstruction in
interior/exterior problem. Dotted line shows locations of the transducers}%
\label{F:goodfronts}%
\end{figure}

%================================================================================

\section{Numerical realization and simulations\label{S:numerics}}

%================================================================================

We present below the results of numerical simulations illustrating the work of
the algorithms proposed in the previous sections. The first two series of
simulations test the performance of the CMT inversion technique developed in
Section~\ref{S:CMT}.

\subsection{Inversion of the CMT}

We first apply the CMT inversion algorithm to a standard interior problem.
This problem is well-posed and a good quantitatively correct reconstruction is
expected. As a phantom, we used a set of functions whose gray-scale image is
shown in Figure~2(a). These functions are mostly constant (equal 1) within
their support, however the transition from 1 to 0 is smooth. This smoothness
is especially important in the next section where the wave equation was solved
by finite difference methods whose accuracy would be severely compromised if
the test phantom were discontinuous. The simulated transducers are passing
through the circle of radius $0.5$ perpendicular to the plane of the Figure;
their locations are shown by the dotted line in Figure~2(a). Since the phantom
is completely surrounded by the transducers, this is an interior problem.
There were 256 simulated transducers in this experiment; for each of them 401
circular means where computed, with the radii ranging from 0 to 2. Figure~2(b)
shows reconstruction of the function from the circular means by the method
presented in Section~\ref{S:CMT}. As expected, the reconstruction is quite accurate.

In order to test the performance of our method in the exterior/interior
problem we added to the phantom an additional function with the intention to
crudely model aorta walls, as seen, for example, in \cite{Sethur-2007}. The
material interfaces modelled by this function are \textquotedblleft visible".
The reconstruction from the circular means is shown in Figure~2(d). One can
see that the interior part of the phantom is reconstructed as well as before.
In spite of the ill-posedness of the exterior problem and the inexact nature
of the regularized algorithm, the exterior part of the phantom is also
reconstructed quantitatively correct, although a careful reader will notice
some variations in the brightness of the \textquotedblleft aorta walls".

It is interesting to see what effect on the reconstruction will have the
presence of ``invisible" interfaces. To this end we added to the phantom
several circular inclusions (see Figure~3(a)). Figure~3(b) demonstrates the
reconstruction from the accurate circular means. In Figure~3(c) before the
reconstruction a simulated noise was added to the circular means; the
intensity of the noise was $5\%$ of the ``signal" in $L^{2}$ norm. For a fair
comparison images in Figure~3 are drawn using the same gray scale. Images in
Figures~3(b) and~3(c) show that, as expected, the ``invisible" material
interfaces are significantly blurred, and the details (circular inclusions)
are not reconstructed correctly. Nevertheless, the visible parts of the
corresponding boundaries are captured, and the artifacts are relatively well
localized, so that ``aorta walls" are still clearly seen.

\begin{figure}[t]
\begin{center}
\subfigure[]{\includegraphics[width=1.7in,height=1.7in]{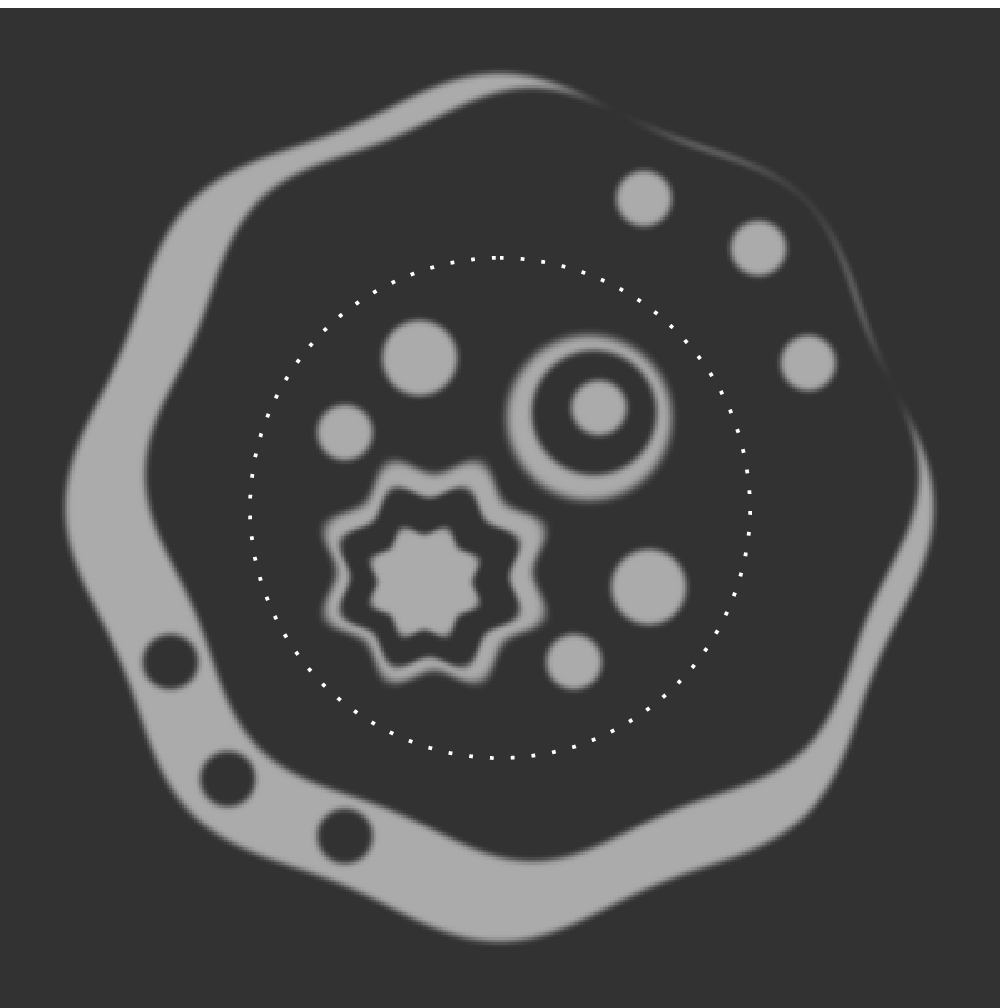}}
\subfigure[]{\includegraphics[width=1.7in,height=1.7in]{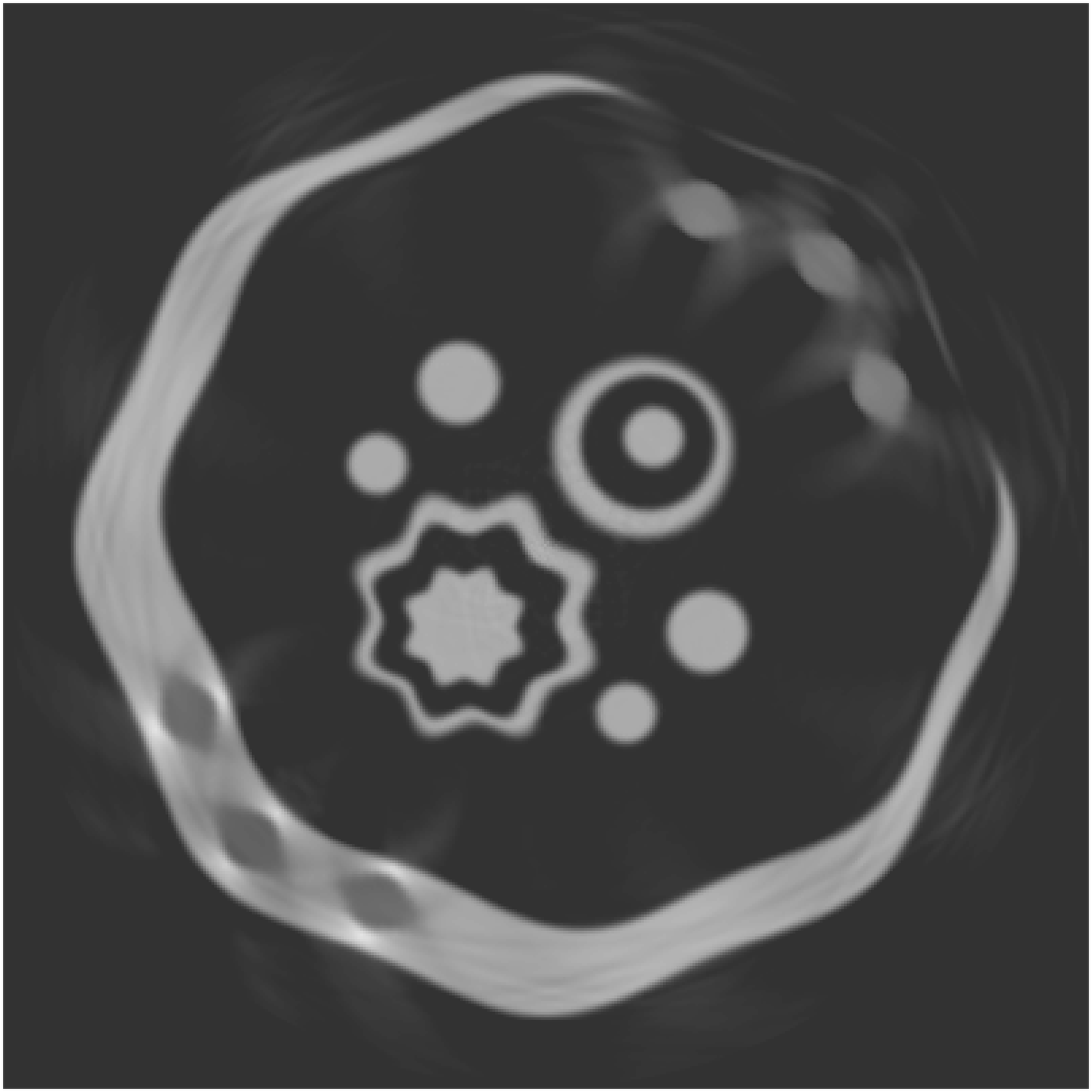}}
\subfigure[]{\includegraphics[width=1.7in,height=1.7in]{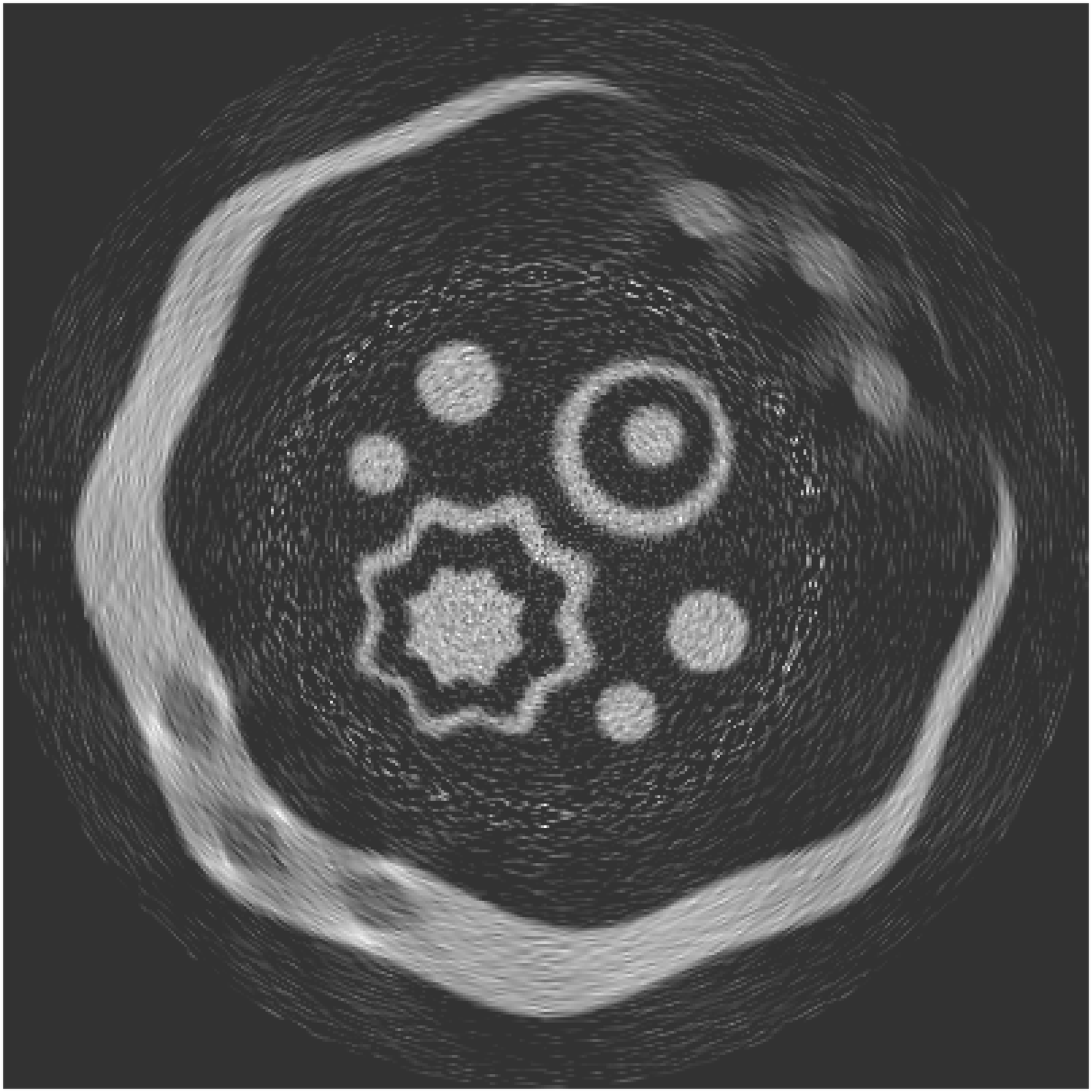}}
\end{center}
\caption{Simulation (a) phantom, dotted line shows locations of the
transducers (b) reconstruction from accurate data (c) reconstruction from data
with $5\%$ noise. }%
\label{F:badfronts}%
\end{figure}

\begin{figure}[t]
\begin{center}
\subfigure[]{\includegraphics[width=1.7in,height=1.7in]{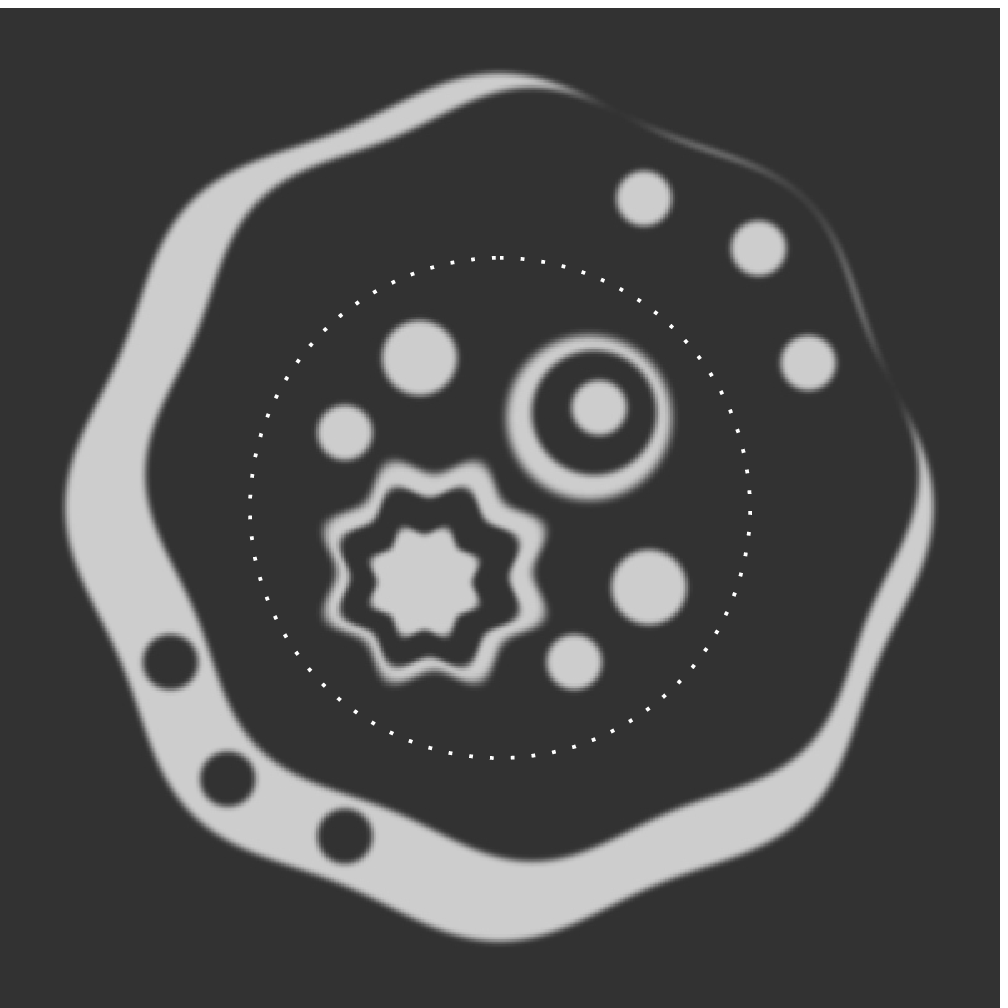}}
\subfigure[]{\includegraphics[width=1.7in,height=1.7in]{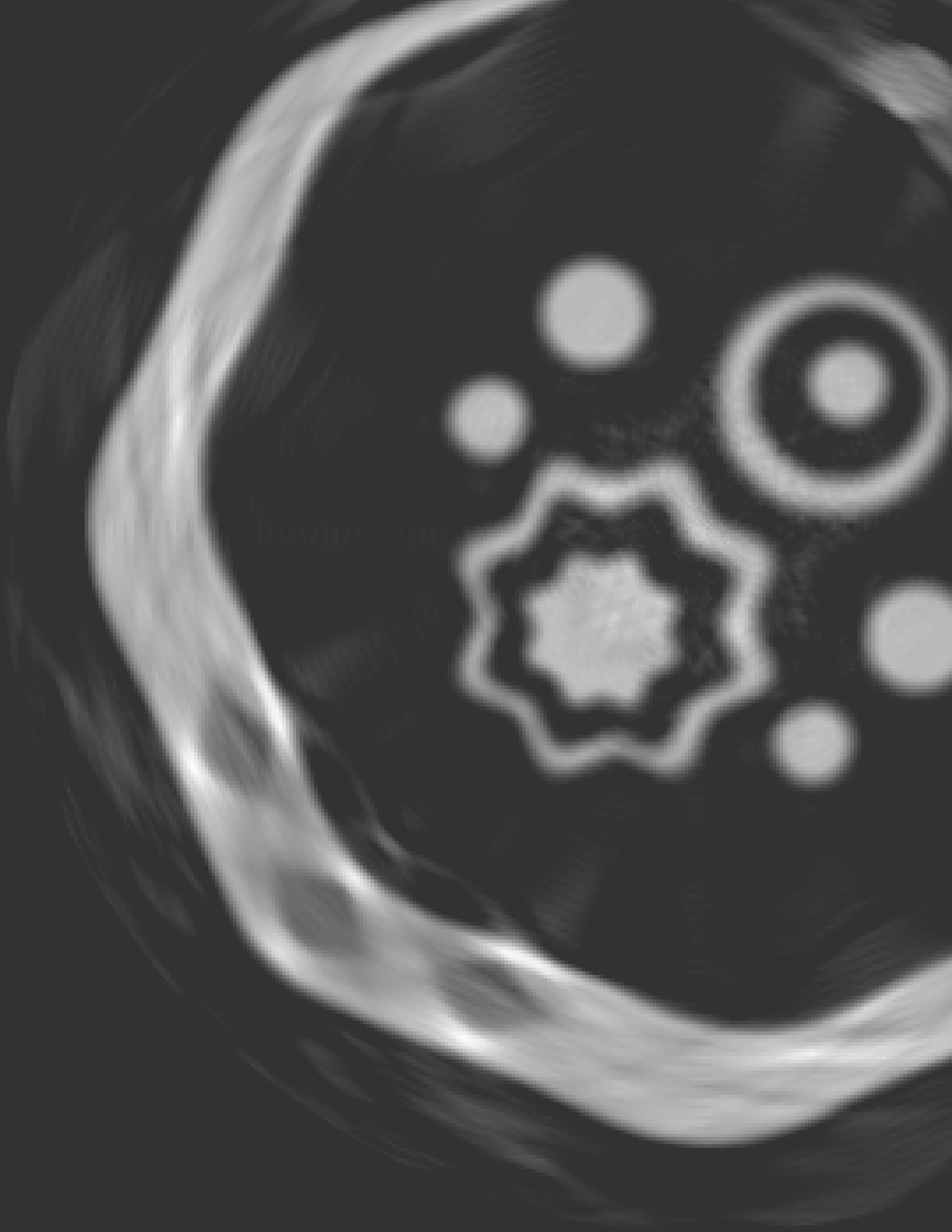}}
\subfigure[]{\includegraphics[width=1.7in,height=1.7in]{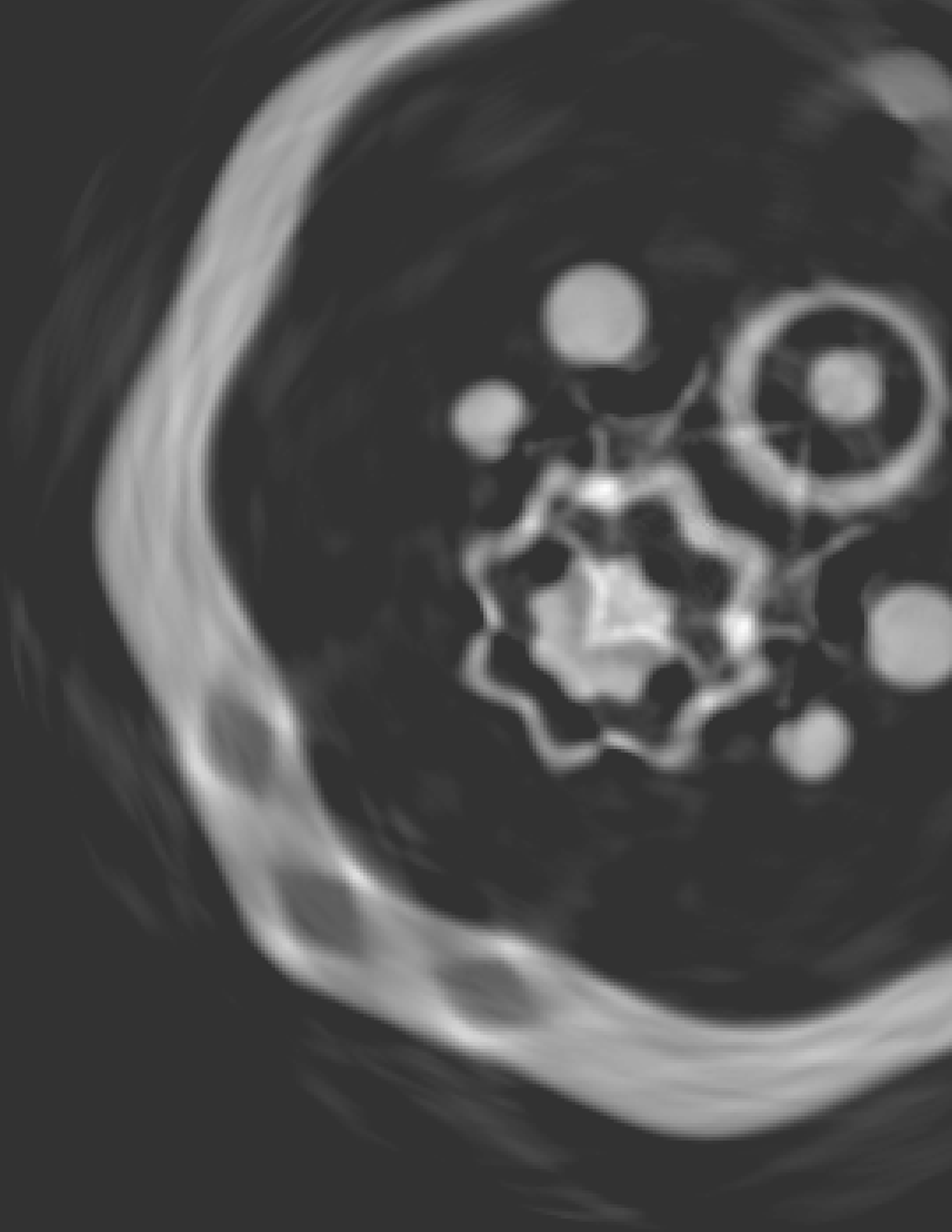}}
\end{center}
\caption{Reconstruction from the wave equation simulation (a) phantom, dotted
line shows locations of the transducers (b) reconstruction corresponding to
the maximum speed of 1.01 (c) reconstruction corresponding to the maximum
speed of 1.05. }%
\label{F:born}%
\end{figure}

\subsection{Simulation of the IVUS}

Our next simulation tests the feasibility of tomography reconstruction in
IVUS. The geometry of the simulation is similar to that of the previous
section. This time, however, for every transducer location using the
time-stepping finite-difference technique we solved two wave equations. One of
them corresponds to the uniform speed of sound; it models $u_{0}%
(t,\mathbf{x})$ from Section~\ref{S:Born}. Since the initial condition does
not depend on $x_{3},$ this is actually a 2D problem. The other equation we
solve is equation (\ref{E:full-wave}) with a variable speed of sound, again
not depending on $x_{3}.$The simulated transducers ``measured" the difference
$w(t,\mathbf{x})$ of these two solutions.

The variable speed of sound in the first of our simulations was equal to 1
plus 1\% perturbation modelled by the phantom from the previous section. The
perturbation is shown in Figure~4(a). The circular means of the perturbation
were reconstructed using the method from Section~\ref{S:Born}, and the
perturbation was then reconstructed from the circular means by the method of
Section~\ref{S:CMT}. The result is shown in Figure~4(b). Although additional
artifacts can be noticed in the image (as compared to Figure~3(b)), the
``aorta walls" are still clearly seen and the interior part of the image is
reconstructed well. The additional artifacts are caused by the imprecise
nature of the method (the Born approximation is used) and by the errors
introduced by finite differences when modelling the forward problem.

In order to better understand the effect of the Born approximation we repeated
the last simulation with new variable speed of sound equal 1 plus a 5\%
perturbation given by the same function. The result is shown in Figure~4(c)
(with the gray scale modified by the factor of 5). One can see additional
artifacts in the central part of the image; they are clearly caused by the
increased error in the Born approximation, since everything else remains the
same. Nevertheless the rest of the image is reconstructed quite well, and the
parts of the phantom with visible material interfaces are reconstructed with a
reasonable accuracy.

\section*{Concluding remarks}

We have considered in the present paper the inverse problems that arise in
IVPA and IVUS, and have shown that under several simplifying assumptions these
problems can be reduced to the solution of the exterior problem for the CMT.
The latter is, in general, severely ill-posed. However, the regularized
algorithm proposed in Section \ref{S:CMT} yields stable reconstruction of the
visible material interfaces while it blurs the invisible ones. If the
invisible interfaces are absent or almost absent in the image, the algorithm
reconstructs a quantitatively correct image. Such a situation (absence of
invisible interfaces) is not uncommon in practical application of IVPA and
IVUS, judging from the images we found in the literature.

In order to carry out our analysis we have made several simplifying
assumptions. In particular, the transducers were assumed infinitely long and
infinitely thin, the body of the catheter was assumed to be made of material
with the same speed of sound as that in blood. We also assumed that the speed
of sound in the soft tissues constituting and surrounding the vessels is close
to the speed of sound in blood. The latter assumption is standard in problems
of thermoacoustic tomography; it allowed us to use the constant speed
approximation in IVPA and the Born approximation in IVUS. The model of
acoustically transparent and infinitely long catheter is more crude; it was
used mostly to simplify the analysis and to develop practically useful
reconstruction algorithms. It is not clear whether it is feasible or practical
to manufacture the catheter whose speed of sound matches that of blood or
water. The effect of the finite length of the transducers also requires
further investigation.

However, our purpose was not to develop a perfect reconstruction algorithm,
but rather to demonstrate the feasibility of tomography-like reconstruction in
IVPA and IVUS. More sophisticated techniques, that would take into account the
details we neglected or over-simplified in the present, first approach to the
problem, will be the subject of the future work.

\section*{Acknowledgements}

The authors gratefully acknowledge support by the NSF through grants NSF/DMS
1109417 (the first author) and NSF/DMS 1211521 (the second author). We also
would like to thank E. T. Quinto for many helpful discussions.

\end{document}